\theoremstyle{plain}
\newtheorem{theorem}{Theorem}[section]
\newtheorem{lem}[theorem]{Lemma}
\newtheorem{prop}[theorem]{Proposition}
\newtheorem{col}[theorem]{Corollary}
\theoremstyle{definition}
\newtheorem{defn}[theorem]{Definition}
\newtheorem{remark}[theorem]{Remark}
\newtheorem{example}[theorem]{Example}
\newcommand{\A}{{\mathcal{A}}}
\newcommand{\II}{{\mathcal{I}}}
\newcommand{\FF}{\mbox{Facets}}
\newcommand{\VV}{\mbox{V}}
\newcommand{\FFF}{{\mathcal{F}}}
\newcommand{\CC}{{\mathcal{C}}}
\newcommand{\KK}{{\mathbb{K}}} 
\newcommand{\lcm}{\operatorname{lcm}}
\newcommand{\Supp}{\ Supp \ }
\numberwithin{equation}{section}
\title{When is a Squarefree Monomial Ideal of Linear Type ?}
\author{Ali Alilooee \and Sara Faridi}
\begin{document}
 
\maketitle

\begin{abstract}
In $1995$ Villarreal gave a combinatorial description of the equations of Rees algebras of quadratic squarefree monomial ideals. 
His description was based on the concept of closed even walks in a graph. In this paper we will generalize his results to all squarefree monomial ideals
by defining even walks in a simplicial complex. We show that simplicial complexes with no even walks have facet ideals that are of linear type, generalizing
Villarreal's work.         
\end{abstract}


\section{Introduction}
Rees algebras are of special interest in algebraic geometry and commutative algebra since they describe 
the blowing up of the spectrum of a ring along the subscheme defined by an ideal.
The Rees algebra of an ideal can also be viewed as a quotient of a polynomial ring. If $I$ is an ideal of a ring $R$,
we denote the Rees algebra of $I$ by $R[It]$, and we can represent  $R[It]$ as $S/J$ where $S$ is a polynomial
ring over $R$. The ideal $J$ is called the {\bf defining ideal} of $R[It]$. Finding generators of $J$ is difficult and crucial for a better understanding of $R[It]$. Many authors
have worked to get a better insight into these generators in special classes of ideals, such as those with special height, special embedding dimension and so on.  

When $I$ is a monomial ideal, using methods from Taylor's thesis~\cite{Taylor1966} one can describe the generators of $J$ as binomials. 
Using this fact, Villarreal \cite{Villarreal1995} gave a combinatorial 
characterization of $J$ in the case of degree $2$ squarefree monomial ideals. His work led Fouli and Lin~\cite{fouli2013} to consider the question of characterizing generators of $J$ when $I$ is a 
squarefree monomial ideal in any degree. With this purpose in mind we define simplicial even walks, and show that for all squarefree monomial ideals, 
they identify generators of $J$ that may be obstructions to $I$ being of linear type. We show that in dimension $1$, simplicial even walks are the same as closed even walks of graphs.
  We then further investigate properties of simplicial even walks, and reduce the problem of checking whether 
an ideal is of linear type to identifying simplicial even walks. At the end of the paper we give a new proof for Villarreal's Theorem (Corollary~\ref{col:vill}).     


\section{Rees algebras and their equations}


Let $I$ be a monomial ideal in a polynomial ring $R=\KK[x_1,\dots,x_n]$ over a field $\KK$. We denote the {\bf Rees algebra} of $I=(f_1,\dots,f_{q})$ by $R[It]=R[f_1t,\dots,f_qt]$ and 
consider the homomorphism $\psi$ of algebras 
\begin{align*}
 \psi:R[T_1,\dots,T_q]\longrightarrow R[It], \hspace{.06 in}  T_i\mapsto f_{i}t.
\end{align*}
If $J$ is the kernel of $\psi$, we can consider the Rees algebra $R[It]$ as the quotient of the polynomial ring $R[T_1,\dots,T_q]$. 
The ideal $J$ is called the \textbf{defining ideal} of $R[It]$ and 
its minimal generators are called the \textbf{Rees equations} of $I$.
These equations carry a lot of information about $R[It]$; see for example ~\cite{Vasconcelos1994} for more details. 


\begin{defn}
For integers $s,q\geq 1$ we define 
$$\II_{s}=\{(i_1,\dots,i_s):1\leq i_1\leq i_2\leq\dots\leq i_{s}\leq q\}\subset {\mathbb{N}}^{s}.$$ 
Let $\alpha=(i_1,\dots,i_s)\in\II_{s}$ and $f_1,\dots,f_q$ be monomials in $R$ and 
$T_1,\dots,T_q$ be variables. We use the following notation for the rest of this paper. If $t\in\{1,\dots,s\}$     
\begin{itemize}
\item $\Supp(\alpha)=\{i_1,\dots,i_s\}$; 
\item $\widehat{\alpha}_{i_t}=(i_1,\dots,\widehat{i}_{t},\dots,i_s)$;
\item ${T}_{\alpha}=T_{i_1}\dots T_{i_s}$ and $\Supp(T_{\alpha})=\{T_{i_1},\dots ,T_{i_s}\}$;
\item ${f}_{\alpha}=f_{i_1}\dots f_{i_s}$;
\item $\displaystyle \widehat{f}_{\alpha_t}=f_{i_1}\dots\widehat{f}_{i_t}\dots f_{i_s}=\frac{f_{\alpha}}{f_{i_t}}$;
\item $\displaystyle \widehat{T}_{\alpha_t}=T_{i_1}\dots\widehat{T}_{i_t}\dots T_{i_s}=\frac{T_{\alpha}}{T_{i_t}}$;
\item $\alpha_{t}(j)=(i_1,\dots,i_{t-1},j,i_{t+1},\dots,i_s)$, for $j\in\{1,2,\dots,q\}$ and $s\geq 2$.
\end{itemize}
\end{defn}
For an ideal $I=(f_1,\dots,f_q)$ of $R$ the defining ideal $J$ of $R[It]$ is graded and
$$J=J^{\prime}_1\oplus J^{\prime}_2\oplus\cdots$$
where $J^{\prime}_s$ for $s\geq 1$ is the $R$-module.

The ideal $I$ is said to be \textbf{of linear type} if $J=(J^{\prime}_1)$; in other words, the defining ideal of $R[It]$ is generated by linear forms in the variables $T_1,\dots,T_q$.

\begin{defn}
Let $I=(f_1,\dots,f_q)$ be a monomial ideal, $s\geq 2$ and $\alpha,\beta\in\II_{s}$. We define 
\begin{eqnarray}\label{eqn:lineartype}
T_{\alpha,\beta}(I)=\left(\frac{\lcm(f_{\alpha},f_{\beta})}{f_{\alpha}}\right)T_{\alpha}-
\left(\frac{\lcm(f_{\alpha},f_{\beta})}{f_{\beta}}\right)T_{\beta}.\label{equation:new}
\end{eqnarray}
When $I$ is clear from the context we use $T_{\alpha,\beta}$ to denote $T_{\alpha,\beta}(I)$.   
\end{defn}
\begin{prop}(D. Taylor~\cite{Taylor1966})\label{eqn:taylortheorem}
Let $I=(f_1,\dots,f_q)$ be a monomial ideal in $R$ and $J$ be the defining ideal of $R[It]$. Then for $s\geq 2$ we have 
\begin{align*}\label{eqn:2}
J^{\prime}_s= \left\langle T_{\alpha,\beta}(I): \alpha,\beta\in \II_{s}
\right\rangle.   
\end{align*}
Moreover, if $m=\gcd{(f_1,\dots,f_{q})}$ and $I^{\prime}=(f_1/m,\dots,f_{q}/m)$, then for every $\alpha,\beta\in \II_{s}$ we have 
$$T_{\alpha,\beta}(I)=T_{\alpha,\beta}(I^{\prime})$$
and hence $R[It]=R[I^{\prime}t]$.
\end{prop}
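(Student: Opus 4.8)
The plan is to prove the two assertions in turn, with essentially all of the content residing in the first (Taylor's description of $J'_s$) and the second reducing to a short computation. For the easy containment $\langle T_{\alpha,\beta}(I):\alpha,\beta\in\II_s\rangle\subseteq J'_s$ I would simply apply $\psi$. Since $\psi(T_\alpha)=f_\alpha t^s$ for $\alpha\in\II_s$, a direct computation gives
\[\psi(T_{\alpha,\beta}(I))=\lcm(f_\alpha,f_\beta)\,t^s-\lcm(f_\alpha,f_\beta)\,t^s=0,\]
and each $T_{\alpha,\beta}(I)$ is homogeneous of degree $s$ in the $T_i$ with coefficients in $R$, so it lies in $J'_s$; as $J'_s$ is an $R$-module, the $R$-module it generates is contained in $J'_s$.

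For the reverse containment I would exploit the fine $\ZZ^n$-grading: grade $R[T_1,\dots,T_q]$ by $\deg(x_i)=\mathbf{e}_i$ and $\deg(T_j)=\deg(f_j)$, so that $\psi$ becomes a graded map into $R[t]$ (graded by $\ZZ^n$, forgetting $t$). Then $J$, and hence each $J'_s$, is $\ZZ^n$-graded, and it suffices to show every multihomogeneous $F\in J'_s$ lies in $\langle T_{\alpha,\beta}(I)\rangle$. Writing $F=\sum_{\alpha\in\II_s}c_\alpha T_\alpha$, multihomogeneity in multidegree $\mathbf{a}\in\NN^n$ forces each nonzero $c_\alpha$ to be a scalar multiple $\lambda_\alpha\,x^{\mathbf{a}}/f_\alpha$ of a monomial, with $\lambda_\alpha\in\KK$ nonzero only when $f_\alpha\mid x^{\mathbf{a}}$; and $F\in J$ is equivalent to $\psi(F)=0$, i.e. $\sum_\alpha c_\alpha f_\alpha=0$, which collapses to the scalar relation $\sum_\alpha\lambda_\alpha=0$.

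The remaining step is the classical observation that a syzygy among monomials concentrated in a single multidegree is an $R$-combination of pairwise Taylor syzygies. Assuming $F\neq0$ and fixing a support index $\alpha_0$ with $\lambda_{\alpha_0}\neq0$, I would use $\sum_\alpha\lambda_\alpha=0$ to telescope
\[F=\sum_\alpha \lambda_\alpha\frac{x^{\mathbf{a}}}{f_\alpha}T_\alpha=\sum_\alpha\lambda_\alpha\left(\frac{x^{\mathbf{a}}}{f_\alpha}T_\alpha-\frac{x^{\mathbf{a}}}{f_{\alpha_0}}T_{\alpha_0}\right),\]
and then, using $f_\alpha,f_{\alpha_0}\mid x^{\mathbf{a}}$ (hence $\lcm(f_\alpha,f_{\alpha_0})\mid x^{\mathbf{a}}$), verify the identity
\[\frac{x^{\mathbf{a}}}{f_\alpha}T_\alpha-\frac{x^{\mathbf{a}}}{f_{\alpha_0}}T_{\alpha_0}=\frac{x^{\mathbf{a}}}{\lcm(f_\alpha,f_{\alpha_0})}\,T_{\alpha,\alpha_0}(I),\]
exhibiting each difference as an $R$-multiple of a generator and so $F\in\langle T_{\alpha,\beta}(I)\rangle$. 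I expect this spanning step to be the crux: everything else is bookkeeping about the grading, whereas here one must recognize that the coordinate-sum-zero relations are exactly spanned by pairwise differences and that each such difference is realized by a Taylor syzygy up to a monomial factor.

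For the ``moreover'' part I would set $g_i=f_i/m$, so that $f_\alpha=m^s g_\alpha$ for $\alpha\in\II_s$ and therefore $\lcm(f_\alpha,f_\beta)=m^s\lcm(g_\alpha,g_\beta)$; the coefficients $\lcm(f_\alpha,f_\beta)/f_\alpha=\lcm(g_\alpha,g_\beta)/g_\alpha$ are unchanged, giving $T_{\alpha,\beta}(I)=T_{\alpha,\beta}(I')$ for all $s\geq2$. For the linear part, since $R$ is a domain and $m\neq0$, we have $\sum_i c_i f_i=0\iff\sum_i c_i g_i=0$, so $J'_1(I)=J'_1(I')$ as well. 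By the first assertion all graded pieces of the two defining ideals agree, whence $J(I)=J(I')$ and consequently $R[It]=R[I't]$.
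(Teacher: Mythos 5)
Your proof is correct, but there is nothing in the paper to compare it against: the paper does not prove this proposition at all, it quotes it as a known result from Taylor's thesis \cite{Taylor1966} and builds on it. What you have supplied is the standard (essentially Taylor's) argument, and it is complete. The containment $\supseteq$ by applying $\psi$ is right; for $\subseteq$, the key structural point — that $J$ is bigraded (by $T$-degree and by the fine $\ZZ^n$-multidegree with $\deg T_j=\deg f_j$), so one may assume $F\in J'_s$ is multihomogeneous — is correctly identified, and then multihomogeneity does collapse $\psi(F)=0$ to the scalar relation $\sum_\alpha\lambda_\alpha=0$. The telescoping step and the identity
$$\frac{x^{\mathbf{a}}}{f_\alpha}T_\alpha-\frac{x^{\mathbf{a}}}{f_{\alpha_0}}T_{\alpha_0}=\frac{x^{\mathbf{a}}}{\lcm(f_\alpha,f_{\alpha_0})}\,T_{\alpha,\alpha_0}(I)$$
are exactly what is needed, and they are valid because $f_\alpha$ and $f_{\alpha_0}$ both divide $x^{\mathbf{a}}$ whenever $\lambda_\alpha,\lambda_{\alpha_0}\neq 0$ (the only minor informality is that in the telescoped sum the terms with $\lambda_\alpha=0$ should be discarded, since for those $x^{\mathbf{a}}/f_\alpha$ need not be a monomial; they are multiplied by zero, so nothing breaks). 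Your handling of the ``moreover'' part is also careful in the one place where care is needed: the identity $T_{\alpha,\beta}(I)=T_{\alpha,\beta}(I')$ only controls $J'_s$ for $s\geq 2$, and you correctly supply the separate degree-one argument ($\sum c_if_i=0\iff\sum c_ig_i=0$ since $R$ is a domain and $m\neq 0$) before concluding that the two defining ideals, and hence the two Rees algebras (as quotients of $R[T_1,\dots,T_q]$ by the same ideal), coincide.
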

In light of Proposition~\ref{eqn:taylortheorem}, we will always assume that if $I=(f_1,\dots,f_q)$ then 
\begin{eqnarray*}
\gcd{(f_1,\dots,f_q)}=1.
\end{eqnarray*}
We will also assume $\Supp(\alpha)\cap\Supp(\beta)=\emptyset$, since otherwise $T_{\alpha,\beta}$ reduces to those with this property.
This is because if $t\in \Supp(\alpha)\cap\Supp(\beta)$ then we have 
$T_{\alpha,\beta}=T_{t}T_{\widehat{\alpha}_{t},\widehat{\beta}_{t}}.$  

For this reason we define 
\begin{eqnarray}\label{eqn:2}
\displaystyle J_s= \left\langle T_{\alpha,\beta}(I): \alpha,\beta\in \II_{s},\Supp(\alpha)\cap\Supp(\beta)=\emptyset
\right\rangle   
\end{eqnarray}
as an $R$-module. Clearly $J=J_1S+J_2S+\cdots$.
\begin{defn}\label{den:def}
Let $I=(f_1,\dots,f_q)$ be a squarefree monomial ideal in $R$ and $J$ be the defining ideal of $R[IT]$, $s\geq 2$, and $\alpha=(i_1,\dots,i_s),\beta=(j_1,\dots,j_s)\in \II_s$. 
We call $T_{\alpha,\beta}$ {\bf redundant} if it is a redundant generator of $J$, coming from lower degree; i.e. 
$$\displaystyle T_{\alpha,\beta}\in J_1S+\dots+ J_{s-1}S.$$
 
\end{defn}

\section{Simplicial even walks}

By using the concept of closed even walks in graph theory Villarreal~\cite{Villarreal1995} classified all Rees equations of  
edge ideals of graphs in terms of  
closed even walks. In this section our goal is to define an even walk in a simplicial complex in order to classify all irredundant Rees equations of squarefree monomial ideals.
Motivated by the works of S. Petrović and D. Stasi in \cite{sonia2012} we generalize closed even walks from graphs to simplicial complexes. 

We begin with basic definitions that we will need later. 
\begin{defn}
A \textbf{simplicial complex} on vertex set $\VV=\left\{x_1,\dots,x_n\right\}$ is a collection $\Delta$ of subsets of $\VV$ satisfying
\begin{enumerate}

\item $\left\{x_i\right\}\in \Delta$ for all $i$,

\item $F \in \Delta , G\subseteq F \Longrightarrow G \in \Delta$.

\end{enumerate} 
The set $\VV$ is called the {\bf vertex set} of $\Delta$ and we denote it by $\VV(\Delta)$. 
The elements of $\Delta$ are called \textbf{faces} of $\Delta$ and the maximal faces under inclusion are called \textbf{facets}. We denote the simplicial complex $\Delta$
with facets $F_1,\dots,F_s$ by $\left\langle F_1,\dots,F_s\right\rangle$. We denote the set of facets of $\Delta$ with $\FF\left(\Delta\right)$.
A \textbf{subcollection} of a simplicial complex $\Delta$ is a simplicial complex whose facet set is a subset  of the facet set of $\Delta$.
\begin{defn}
Let $\Delta$ be a simplicial complex with at least three facets, ordered as $F_1,\dots,F_q$. Suppose $\bigcap F_i=\emptyset$. 
With respect to this order $\Delta$ is a
\renewcommand{\labelenumi}{(\roman{enumi})}
\begin{enumerate}
\item {\bf extended trail} if we have 
\begin{align*}
F_{i}\cap F_{i+1}\neq \emptyset\hspace{.3 in}\mbox{$i=1,\dots,q$}\hspace{.2 in}\mbox{mod $q$};
\end{align*}
\item {\bf special cycle}~\cite{herzog2008} if $\Delta$ is an extended trail in which we have 
\begin{eqnarray*}
 F_{i}\cap F_{i+1}\not\subset \bigcup_{j\notin\{i,i+1\}} F_j&\mbox{$i=1,\dots,q$}\hspace{.2 in}\mbox{mod $q$};
\end{eqnarray*}
\item {\bf simplicial cycle}~\cite{Faridi2007} if $\Delta$ is an extended trail in which we have  
\begin{eqnarray*}
F_{i}\cap F_{j}\neq \emptyset\Leftrightarrow j\in\{i+1,i-1\}&\mbox{$i=1,\dots,q$}\hspace{.2 in}\mbox{mod $q$}.
\end{eqnarray*}
\end{enumerate}
\end{defn}
We say that $\Delta$ is an extended trail (or special or simplicial cycle) if there is an order on the facets of $\Delta$ such that the specified conditions hold on that order. 
Note that 
\begin{align*}
\{\mbox{Simplicial Cycles}\}\subseteq \{\mbox{Special Cycles}\}\subseteq \{\mbox{Extended Trails}\}.
\end{align*}
\begin{defn}[Simplicial Trees and Simplicial Forests \cite{Faridi2007}\& \cite{Faridi2002}]
A simplicial complex $\Delta$ is called a {\bf simplicial forest} if $\Delta$ contains no simplicial cycle.
 If $\Delta$ is also connected, it is called a {\bf simplicial tree}.   
\end{defn}
\begin{defn}[\cite{Zheng2004}, Lemma 3.10]\label{Zheng:1}
 Let $\Delta$ be a simplicial complex. The facet $F$ of $\Delta$ is called a {\bf good leaf} of $\Delta$ if 
 the set $\left\{H\cap F; H\in \FF(\Delta)\right\}$ is totally ordered by inclusion.  
\end{defn}
Good leaves were first introduced by X. Zheng in her PhD thesis~\cite{Zheng2004} and later in~\cite{Faridi2007}. The existence of a good leaf in every tree was proved in 
\cite{herzog2008} in 2008. 
\begin{theorem}[\cite{herzog2008}, Corollary 3.4]~\label{theorem:herzog}
 Every simplicial forest contains a good leaf. 
\end{theorem}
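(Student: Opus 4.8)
The plan is to prove the statement by first reducing to the connected case and then inducting on the number of facets, using the notion of a (simplicial) leaf. Recall from \cite{Faridi2007} that a facet $F$ of $\Delta$ is a \emph{leaf} if either $F$ is the only facet, or there is a facet $G\neq F$, called a \emph{joint} of $F$, with $H\cap F\subseteq G\cap F$ for every facet $H$ of $\Delta$; moreover $\Delta$ is a simplicial forest precisely when every nonempty subcollection of $\Delta$ has a leaf, so in particular $\Delta$ itself has a leaf. I will exploit the two candidate facets that this structure naturally hands us — a leaf $F$ and an inductively produced good leaf of the complex obtained by deleting $F$.

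First I would reduce to the case where $\Delta$ is a simplicial tree. If $\Delta$ is a disconnected forest, any two facets lying in different connected components are disjoint; hence if $F$ is a good leaf of the component containing it, then every facet outside that component meets $F$ in $\emptyset$, which is contained in every other trace. Thus a good leaf of a single component is automatically a good leaf of $\Delta$, and it suffices to treat one connected component, i.e.\ a simplicial tree. Now I would induct on the number $q$ of facets. For $q\le 2$ the claim is immediate: with facets $F_1,F_2$ the traces on $F_1$ are $F_1$ and $F_1\cap F_2\subseteq F_1$, which form a chain.

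For the inductive step, let $F$ be a leaf of $\Delta$ with joint $G$; after relabeling assume $F=F_q$, and set $\Delta'=\langle F_1,\dots,F_{q-1}\rangle$. Since any facet $H$ meeting $F$ satisfies $\emptyset\neq H\cap F\subseteq G\cap F\subseteq G$, every such $H$ also meets $G$, so $\Delta'$ is again a connected tree on $q-1$ facets and carries a good leaf $L$ by induction. The one useful observation is that when $L\neq G$ we have $F\cap L\subseteq F\cap G\subseteq G$ and hence $F\cap L\subseteq G\cap L$: the new trace produced by re-inserting $F$ is dominated by the trace $G\cap L$, which already sits in the chain $\{H\cap L : H\in\FF(\Delta')\}$.

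The main obstacle is exactly to turn this observation into the chain condition for $\Delta$. Knowing $F\cap L\subseteq G\cap L$ guarantees that $F\cap L$ is comparable to every trace lying above $G\cap L$ in the chain, but not automatically to the traces strictly below $G\cap L$; symmetrically, the leaf $F$ itself need not be good, since its traces $\{H\cap F\}$ all lie in $G\cap F$ but may fail to be nested. I expect the crux to be showing that at least one of $F$ and $L$ can always be taken as a good leaf of $\Delta$, by playing the leaf inequality $H\cap F\subseteq G\cap F$ against the goodness of $L$ in $\Delta'$ to compare the offending traces. The cleanest way to close the gap is likely by contradiction: in a minimal such $\Delta$ with no good leaf, every facet would carry two incomparable traces, and I would assemble these branchings into a sequence of facets whose consecutive intersections realize a simplicial cycle, contradicting the hypothesis that $\Delta$ is a forest. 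Verifying that the branchings yield a genuine simplicial cycle, rather than merely an extended trail, is the delicate point.
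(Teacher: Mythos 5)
This statement is not proved in the paper at all: it is quoted verbatim from Herzog--Hibi--Trung--Zheng \cite{herzog2008}, Corollary 3.4. So the only question is whether your argument stands on its own, and it does not. The decisive step that you defer --- ``at least one of $F$ and $L$ can always be taken as a good leaf of $\Delta$'' --- is in fact false, so the induction as you have set it up cannot be closed by any cleverer comparison of traces. Take vertices $a,b,c,d,p,f_1,h_1,k_1$ and the simplicial complex $\Delta=\langle F,G,L,H,K\rangle$ with $F=\{a,c,f_1\}$, $G=\{a,b,c,d\}$, $L=\{a,b,p\}$, $H=\{b,h_1\}$, $K=\{c,k_1\}$. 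This $\Delta$ is a simplicial tree: the only pairwise-intersecting triples are $\{F,G,L\}$, $\{F,G,K\}$, $\{G,L,H\}$, and each has nonempty total intersection ($\{a\}$, $\{c\}$, $\{b\}$ respectively), while no four or five facets induce a cycle, so $\Delta$ contains no simplicial cycle. The facet $F$ is a leaf with joint $G$, since its traces $\{a,c\},\{a\},\emptyset,\{c\}$ all lie in $G\cap F=\{a,c\}$, but $F$ is not a good leaf because $L\cap F=\{a\}$ and $K\cap F=\{c\}$ are incomparable. The facet $L$ is a good leaf of $\Delta'=\langle G,L,H,K\rangle$ (its traces there are $\emptyset\subseteq\{b\}\subseteq\{a,b\}$), yet $L$ is not a good leaf of $\Delta$, because reinserting $F$ contributes the trace $F\cap L=\{a\}$, which is incomparable with $H\cap L=\{b\}$ --- exactly the ``trace strictly below $G\cap L$'' obstruction you identified. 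The good leaves of $\Delta$ are $H$ and $K$: neither the deleted leaf nor the inductively produced good leaf. So a correct inductive step would have to be able to jump to an entirely different facet, and your setup provides no mechanism for doing so.

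Your fallback suggestion --- pass to a minimal complex with no good leaf, and assemble the resulting incomparable traces into a simplicial cycle --- is not a missing detail but the entire content of the theorem: this is essentially how it is proved in \cite{herzog2008}, where the authors show that a complex in which some facet (in their setting, every facet) fails to be a good leaf must contain a special cycle of length at least $3$, and that forests contain no such cycles. You explicitly leave this construction unverified, and the difficulty you name is genuine: a closed chain of facets with nonempty consecutive intersections is only an extended trail, and upgrading it to a special or simplicial cycle requires choosing, for each failure of comparability, witness vertices $x\in (A\cap F)\setminus B$ and $y\in (B\cap F)\setminus A$ and running a careful minimality argument on the resulting walk. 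What you have is a correct reduction to the connected case, a correct base case, a correct (and, as shown above, unavoidable) identification of where the naive induction breaks --- that is, a plan rather than a proof.
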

Let $I=(f_1,\dots,f_q)$ be a squarefree monomial ideal in $R=\mathbb{K}[x_1,\dots,x_n]$.  The \textbf{facet complex} $\FFF(I)$ associated to $I$ is a simplicial complex with  
facets $F_1,\dots,F_s$, where for each $i$,
$$F_i=\left\{x_j:\hspace{.02 in} x_j | f_i,\hspace{.04 in} 1\leq j\leq n\right\}.$$
The \textbf{facet ideal} of a simplicial complex $\Delta$ is the ideal generated by the products of the variables labeling the vertices of each facet of $\Delta$; in other words 
$$\FFF(\Delta)=\left( \prod_{x \in F}x : \ F \mbox{ is a facet of $\Delta$}  \right).$$
\end{defn}

\begin{defn}[\bf{Degree}]\label{egn:defn}
Let $\Delta=\langle F_1,\dots, F_q\rangle$ be a simplicial complex, $\FFF(\Delta)=(f_1,\dots,f_{q})$ be its facet ideal and $\alpha=(i_1,\dots,i_s)\in \II_{s}$, $s\geq 1$.   
We define the  $\alpha$-{\bf degree for a vertex}  $x$ of $\Delta$ to be  
\begin{eqnarray*}
 deg_{\alpha}(x)&=\max\{m:x^{m}|f_{\alpha}\}
\end{eqnarray*}
\end{defn}
  
\begin{example}\label{eqn:first}
Consider Figure [\ref{figure2}] where  
\begin{eqnarray*}
&F_1=\{x_4,x_7,a_3\},F_2=\{x_4,x_5,a_1\},F_3=\{x_5,x_6,a_2\},\\
&F_4=\{x_2,x_3,a_2\},F_5=\{x_1,x_2,a_1\},F_6=\{x_6,x_7,a_1\}. 
\end{eqnarray*}
If we consider $\alpha=(1,3,5)$ and $\beta=(2,4,6)$  then $deg_{\alpha}(a_1)=1$ and $deg_{\beta}(a_1)=2$.
\begin{figure}
  \centering
  \begin{subfigure}[b]{0.3\textwidth}
   \centering
  \includegraphics[width=\textwidth]{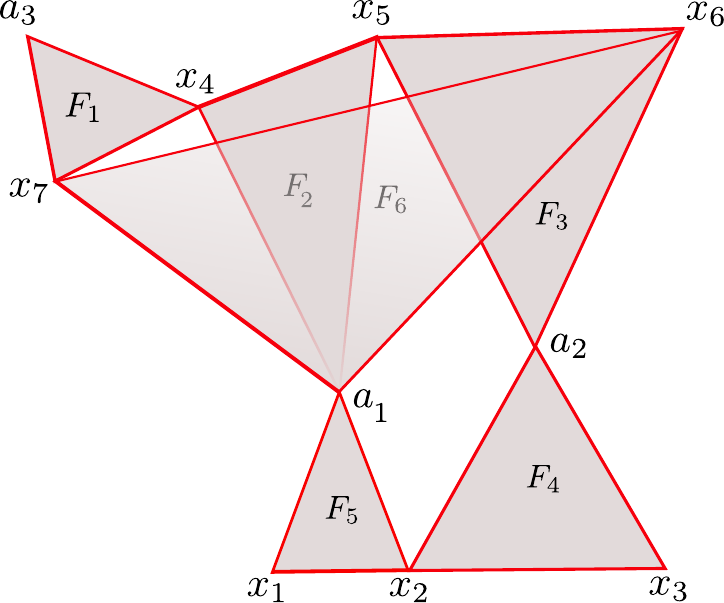} 
   \caption{Even walk}\label{figure2}  
   \end{subfigure}
  \begin{subfigure}[b]{0.4\textwidth}
   \centering
\includegraphics[width=\textwidth]{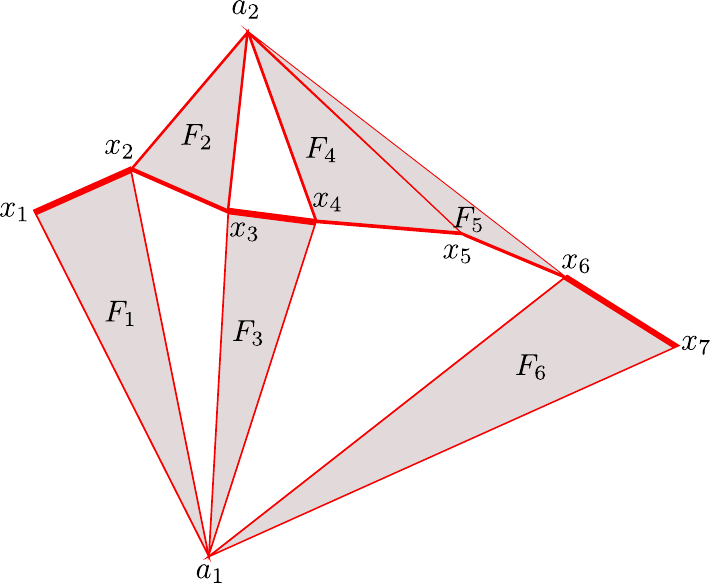}
   \caption{Not an even walk}\label{figure3}
    \end{subfigure}
    \caption{}
\end{figure}

\end{example}
Suppose $I=(f_1,\dots,f_q)$ is a squarefree monomial ideal in $R$ with $\Delta=\left\langle F_1,\dots,F_q\right\rangle $ 
its facet complex and let $\alpha,\beta\in \II_{s}$ where $s\geq 2$ is an integer.  
We set $\alpha=(i_1,\dots,i_s)$ and $\beta=(j_1,\dots,j_s)$ and consider the following sequence of not necessarily distinct facets of $\Delta$
$$\CC_{\alpha,\beta}=F_{i_1},F_{j_1},\dots,F_{i_s},F_{j_s}.$$  
Then (\ref{equation:new}) becomes  
\begin{eqnarray}\label{eqn:lcmequation}
\displaystyle T_{\alpha,\beta}(I)=\left(\prod_{deg_{\alpha}(x) < deg_{\beta}(x)} x^{deg_{\beta}(x)-deg_{\alpha}(x)}\right)T_{\alpha}-
\left(\prod_{deg_{\alpha}(x) > deg_{\beta}(x)} x^{deg_{\alpha}(x) - deg_{\beta}(x)}\right)T_{\beta}
\end{eqnarray}
where the products vary over the vertices $x$ of $\CC_{\alpha,\beta}$. 

\begin{defn}[\bf{Simplicial even walk}]\label{def:scew}
Let $\Delta=\langle F_1,\dots, F_q\rangle$ be a simplicial complex and let $\alpha=(i_1,\dots,i_s),\beta=(j_1,\dots,j_s)\in \II_{s}$,  where $s\geq 2$.
The following sequence of not necessarily distinct facets of $\Delta$  
\begin{eqnarray*}
\CC_{\alpha,\beta}= F_{i_1},F_{j_1},\dots,F_{i_s},F_{j_s}   
\end{eqnarray*}
is called a {\bf simplicial even walk}, or simply ``even walk``, if the following conditions hold
\begin{itemize}
\item  For every $i\in \Supp(\alpha)$ and $j\in \Supp(\beta)$ we have  
\begin{eqnarray*}
F_{i}\backslash F_{j}\not \subset \{x\in \VV(\Delta):   deg_{\alpha}(x) > deg_{\beta}(x)\}&\mbox{and}&
F_{j}\backslash F_{i}\not \subset \{x\in \VV(\Delta):    deg_{\alpha}(x) < deg_{\beta}(x)\}.
\end{eqnarray*}

\end{itemize}
 If $\CC_{\alpha,\beta}$ is connected, we call the even walk $\CC_{\alpha,\beta}$ a {\bf connected} even walk.
\end{defn}
\begin{remark}
It follows from the definition, if $\CC_{\alpha,\beta}$ is an even walk then $\Supp(\alpha)\cap \Supp(\beta)=\emptyset$. 
\end{remark}

\begin{example}
In Figures [\ref{figure2}] and  [\ref{figure3}] by setting $\alpha=(1,3,5),\beta=(2,4,6)$ we have $\CC_{\alpha,\beta}=F_1,\dots,F_6$ is an even walk in [\ref{figure2}] but
in [\ref{figure3}] $\CC_{\alpha,\beta}=F_1,\dots,F_6$ is not an even walk because 
$$F_1\backslash F_2=\{x_1,a_1\}=\{x:deg_{\alpha}(x)>deg_{\beta}(x)\}.$$

\end{example}
\begin{remark}
A question which naturally arises here is if a minimal even walk (an even walk that does not properly contain another even walk) can have repeated facets. The answer is positive since 
for instance, the bicycle graph in Figure~\ref{fig:bicycle} is a minimal even walk, because of Theorem~\ref{them:graphwalk} below, but it has a pair of repeated edges.      
\begin{figure}[H]
 \centering
\includegraphics[width=2.5 in]{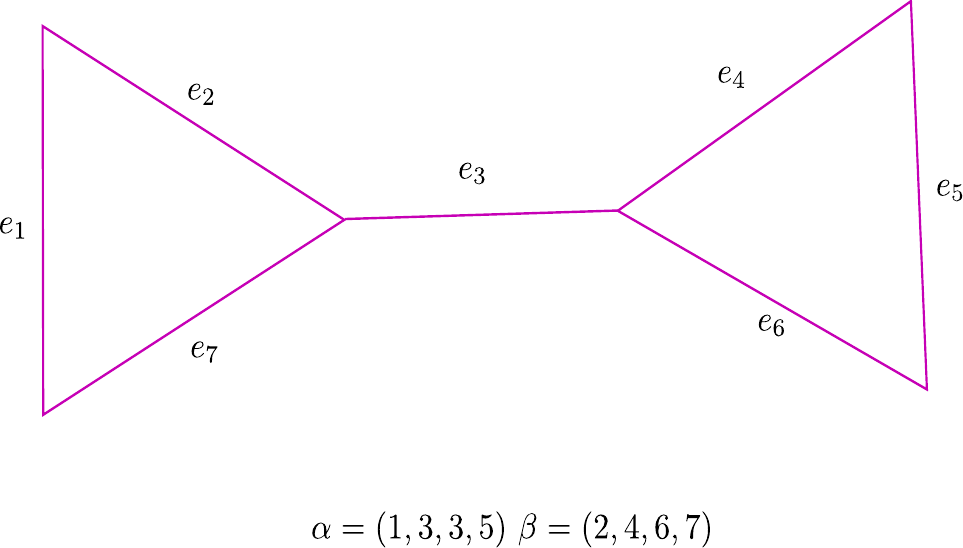} 
\caption{A minimal even walk with repeated facets}\label{fig:bicycle} 
\end{figure}

\end{remark}
\subsection{The structure of even walks}


\begin{prop}[\bf{Structure of even walks}]\label{eqn:luisatheorem}
Let 
$\CC_{\alpha,\beta}=F_{1},F_{2},\dots,F_{2s}$ be an even walk. Then we have 
\renewcommand{\labelenumi}{(\roman{enumi})}
\begin{enumerate}
\item If $i\in \Supp(\alpha)$ (or $i\in\Supp(\beta)$) there exist distinct $j,k\in\Supp(\beta)$ (or $j,k\in\Supp(\alpha)$) such that 
\begin{eqnarray}\label{eqn:intersection}
F_i\cap F_{j}\neq \emptyset&\mbox{and}& F_i\cap F_{k}\neq \emptyset.
\end{eqnarray}
\item The simplicial complex  $\langle\CC_{\alpha,\beta}\rangle$ contains an extended trail of even length labeled $F_{v_1},F_{v_2},\dots,F_{v_{2l}}$ where $v_1,\dots,v_{2l-1}\in\Supp(\alpha)$ and 
$v_2,\dots,v_{2l}\in\Supp(\beta)$.  
\end{enumerate}
\end{prop}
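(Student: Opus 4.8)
The plan is to translate the combinatorial condition in Definition~\ref{def:scew} into a statement about which facets intersect, and then read off both assertions from the incidence structure. Throughout write $A=\Supp(\alpha)$, $B=\Supp(\beta)$ (these are disjoint by the Remark following Definition~\ref{def:scew}), and put $P=\{x\in\VV(\Delta):\deg_{\alpha}(x)>\deg_{\beta}(x)\}$ and $Q=\{x\in\VV(\Delta):\deg_{\alpha}(x)<\deg_{\beta}(x)\}$. The key elementary fact I would record first is that, because $I$ is squarefree, $\deg_{\alpha}(x)$ equals the number of entries $t$ with $x\in F_{i_t}$; in particular $x\in F_i$ with $i\in A$ forces $\deg_{\alpha}(x)\ge 1$, and $\deg_{\beta}(x)\ge 1$ holds exactly when $x$ lies in some facet indexed by $B$.

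For part (i), fix $i\in A$ and argue by contradiction: suppose at most one facet $F_{j_0}$ with $j_0\in B$ meets $F_i$. Applying the first inequality of Definition~\ref{def:scew} to the pair $(i,j_0)$ (or to an arbitrary pair $(i,j)$ if no $\beta$-facet meets $F_i$) produces a vertex $x\in F_i\setminus F_{j_0}$ with $\deg_{\alpha}(x)\le\deg_{\beta}(x)$. Since $x\in F_i$ and $i\in A$ we get $\deg_{\alpha}(x)\ge 1$, hence $\deg_{\beta}(x)\ge 1$, so $x$ belongs to some facet $F_{j'}$ with $j'\in B$; as $x\notin F_{j_0}$ we have $j'\neq j_0$, contradicting the assumption that $F_{j_0}$ was the only $\beta$-facet meeting $F_i$. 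Thus every $F_i$ with $i\in A$ meets at least two distinct facets indexed by $B$, and the statement for $i\in B$ follows by the symmetric argument using the second inequality and $Q$ in place of $P$.

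For part (ii), I would encode the incidences in the bipartite graph $G$ with colour classes $A$ and $B$ and an edge $\{i,j\}$ precisely when $F_i\cap F_j\neq\emptyset$. Part (i) says every vertex of $G$ has degree at least $2$, so $G$ contains a cycle, and because $G$ is bipartite this cycle has even length $2l$ and alternates between the classes; it yields facets $F_{v_1},\dots,F_{v_{2l}}$ with $v_1,\dots,v_{2l-1}\in A$, $v_2,\dots,v_{2l}\in B$ and $F_{v_m}\cap F_{v_{m+1}}\neq\emptyset$ cyclically, i.e.\ a closed alternating trail of even length sitting inside $\langle\CC_{\alpha,\beta}\rangle$. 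The remaining, and I expect hardest, point is to upgrade this to an \emph{extended trail} by verifying $\bigcap_m F_{v_m}=\emptyset$. Here I would choose the cycle to be a shortest cycle of $G$, so that it is induced: any two facets whose indices lie in opposite classes and are non-adjacent on the cycle are disjoint, since an intersection would be a chord. For $2l\ge 6$ this already forces a disjoint pair (for instance $F_{v_1}\cap F_{v_4}=\emptyset$) and hence empty total intersection. The genuinely delicate case is $2l=4$, where every cross pair is cyclically adjacent and chordlessness gives no disjointness; I would rule out a common vertex $x$ of all four facets by re-invoking the mechanism of part (i) — a witness produced by $F_{v_1}\setminus F_{v_2}\not\subset P$ must lie in a $\beta$-facet other than $F_{v_2}$, namely $F_{v_4}$, and running these constraints around the short cycle is incompatible with the four facets meeting only in $\{x\}$. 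Pinning down this last incompatibility (equivalently, exhibiting a disjoint pair among the cycle facets in the degenerate case) is the step I anticipate will require the most care.
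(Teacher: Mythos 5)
Your part (i) is correct and is essentially the paper's own argument: the paper fixes $i\in\Supp(\alpha)$, sets $\A_i=\{j\in\Supp(\beta):F_i\cap F_j\neq\emptyset\}$, and rules out $|\A_i|\leq 1$ exactly as you do, by observing that a vertex of $F_i$ lying in no $\beta$-facet other than the (at most one) excluded one has $deg_{\beta}=0<deg_{\alpha}$, violating the even-walk condition. Your cycle-finding step in (ii) is also the paper's argument in different clothing: the paper runs the standard ``minimum degree at least two implies a cycle'' procedure by hand, repeatedly choosing $u_{n+1}$ in the support opposite to that of $u_n$ with $F_{u_{n+1}}\cap F_{u_n}\neq\emptyset$ and $u_{n+1}\neq u_{n-1}$ until an index repeats, and the alternation between $\Supp(\alpha)$ and $\Supp(\beta)$ forces the closed sequence so obtained to have even length --- precisely your bipartiteness argument.

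The real divergence is your final step, the verification that $\bigcap_m F_{v_m}=\emptyset$. The paper never addresses this: its proof ends the moment the closed alternating sequence is produced, so it implicitly reads ``extended trail'' as requiring only the cyclic intersection pattern, not the empty-intersection clause stated in the definition. Your instinct that $2l=4$ is the sticking point is accurate, but the situation is worse than delicate: that case cannot be closed, because with the empty-intersection requirement the conclusion is false. Consider the cone over a square, $F_1=\{x,a,b\}$, $F_2=\{x,b,c\}$, $F_3=\{x,c,d\}$, $F_4=\{x,d,a\}$, with $\alpha=(1,3)$, $\beta=(2,4)$. Then $f_{\alpha}=f_{\beta}=x^2abcd$, so the sets $\{v:deg_{\alpha}(v)>deg_{\beta}(v)\}$ and $\{v:deg_{\alpha}(v)<deg_{\beta}(v)\}$ are both empty, while every difference $F_i\backslash F_j$ for $i\in\Supp(\alpha)$, $j\in\Supp(\beta)$ is nonempty; hence $\CC_{\alpha,\beta}$ is an even walk. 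Yet every subcollection of three or more of these facets contains $x$ in its total intersection, so no subcollection at all satisfies $\bigcap F_{v_m}=\emptyset$. (Your shortest-cycle argument is sound for $2l\geq 6$, where chordlessness of the bipartite intersection graph yields a disjoint pair such as $F_{v_1}\cap F_{v_4}=\emptyset$; in the example above the intersection graph is $K_{2,2}$ and only $4$-cycles exist, so no choice of cycle helps.) The upshot: your proof is complete exactly up to the point the paper's own proof reaches, and the additional step you flagged should be dropped rather than completed --- it is not provable, and the proposition must be read with the weaker notion of extended trail that the paper's proof actually establishes.
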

\begin{proof}
\begin{inparaenum}[$(i)$]
To prove \item let $i\in\Supp(\alpha)$, and consider the following set 
\begin{align*}
\A_{i}= \{j\in\Supp(\beta): F_{i}\cap F_{j}\neq \emptyset\}.
\end{align*}
We only need to prove that $|\A_{i}|\geq 2$. 

Suppose $|\A_{i}|=0$ then for all $j\in\Supp(\beta)$ we have 
$$F_i\backslash F_j=F_i\subseteq \{x\in\VV(\CC_{\alpha,\beta}):deg_{\alpha}(x)>deg_{\beta}(x)\}$$
because for each $x\in F_i\backslash F_j$ we have $deg_{\beta}(x)=0$ and $deg_{\alpha}(x)>0$; a contradiction.
 
Suppose $|\A_{i}|= 1$ so that there is one $j\in\Supp(\beta)$ such that $F_i\cap F_{j}\neq \emptyset$. So 
for every $x\in F_i\backslash F_{j}$ we have $deg_{\beta}(x)=0$. Therefore, we have 
$$F_i\backslash F_{j}\subseteq \{x\in \VV(\CC_{\alpha,\beta}):deg_{\alpha}(x)>deg_{\beta}(x)\},$$ 
again a contradiction. So we must have $|\A_{i}|\geq 2$. 

To prove \item pick $u_1\in \Supp(\alpha)$. By using the previous part we can say there are $u_0,u_2\in\Supp(\beta)$, $u_0\neq u_2$, such that 
\begin{eqnarray*}
 F_{u_0}\cap F_{u_1}\neq\emptyset&\mbox{and}&F_{u_1}\cap F_{u_2}\neq\emptyset.
\end{eqnarray*}
By a similar argument there is $u_3\in\Supp(\alpha)$ such that $u_1\neq u_3$ and $F_{u_2}\cap F_{u_3}\neq\emptyset$. 
We continue this process. Pick $u_4\in\Supp(\beta)$ such that 
\begin{eqnarray*}
 F_{u_4}\cap F_{u_3}\neq \emptyset&\mbox{and}& u_4\neq u_2.
\end{eqnarray*}
If $u_4=u_0$, then $F_{u_0},F_{u_1},F_{u_2},F_{u_3}$ is an even length extended trail. If not, we continue this process each time taking   
$$F_{u_0},\dots,F_{u_n}$$
and picking $u_{n+1}\in\Supp(\alpha)$ (or $u_{n+1}\Supp(\beta)$) if $u_n\in\Supp(\beta)$ (or $u_n\in\Supp(\alpha)$) such that 
\begin{eqnarray*}
 F_{u_{n+1}}\cap F_{u_{n}}\neq \emptyset&\mbox{and}&u_{n+1}\neq u_{n-1}.  
\end{eqnarray*}
If $u_{n+1}\in\{u_0,\dots,u_{n-2}\}$, say $u_{n+1}=u_m$, then the process stops and we have 
$$F_{u_m},F_{u_{m+1}},\dots,F_{u_n}$$
is an extended trail. The length of this cycle is even since the indices  $u_m,u_{m+1},\dots,u_n$ 
alternately belong to $\Supp(\alpha)$ and $\Supp(\beta)$ (which are disjoint by our assumption), and if $u_m\in\Supp(\alpha)$, then by construction $u_n\in\Supp(\beta)$ and vice-versa. So there are an even length of such indices  and we are done. 

If $u_{n+1}\notin \{u_0,\dots,u_{n-2}\}$ 
we add it to the end of the sequence and repeat the same process for $F_{u_0},F_{u_1},\dots,F_{u_{n+1}}$. Since $\CC_{\alpha,\beta}$ has a finite number of facets, this process has to stop.    


\end{inparaenum}
\end{proof}
\begin{col}\label{col:new}
An even walk has at least $4$ distinct facets. 
\end{col}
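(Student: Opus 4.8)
The plan is to leverage part $(ii)$ of Proposition~\ref{eqn:luisatheorem}, which already guarantees that any even walk $\CC_{\alpha,\beta}$ contains an extended trail of even length $2l$ whose facets alternate between $\Supp(\alpha)$ and $\Supp(\beta)$. First I would observe that since $\Supp(\alpha)\cap\Supp(\beta)=\emptyset$ (by the Remark following Definition~\ref{def:scew}), the facets indexed by $\Supp(\alpha)$ are disjoint as a set of indices from those indexed by $\Supp(\beta)$, so the two ``colors'' in the alternating trail cannot coincide. The extended trail produced has the form $F_{v_1},\dots,F_{v_{2l}}$ with odd-position indices in $\Supp(\alpha)$ and even-position indices in $\Supp(\beta)$; in particular it uses at least one index from each support, forcing $l\geq 1$.

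The key step is to rule out the small cases $2l=2$ entirely and then argue that the trail must actually have length at least $4$ with four distinct facets. If $2l=2$, the ``extended trail'' would be just $F_{v_1},F_{v_2}$ with $v_1\in\Supp(\alpha)$, $v_2\in\Supp(\beta)$; but the definition of extended trail requires at least three facets, so this case does not produce a genuine extended trail and is excluded by the hypothesis of Proposition~\ref{eqn:luisatheorem}$(ii)$. Hence $2l\geq 4$, giving indices $v_1,v_3\in\Supp(\alpha)$ and $v_2,v_4\in\Supp(\beta)$ with $v_1\neq v_3$ and $v_2\neq v_4$ (distinctness within each color comes from how the trail is built in the proof of the proposition, where consecutive same-color indices are chosen distinct). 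Since the two colors are drawn from disjoint index sets, these four indices are pairwise distinct, and therefore the four facets $F_{v_1},F_{v_2},F_{v_3},F_{v_4}$ are distinct facets of $\Delta$.

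The main obstacle I anticipate is the subtlety that the facets in an even walk are \emph{not necessarily distinct} (as the bicycle example in Figure~\ref{fig:bicycle} shows), so one must be careful to conclude distinctness of the four facets from distinctness of their \emph{indices}. Distinct indices in $\Supp(\alpha)\cup\Supp(\beta)$ correspond to distinct generators $f_{v_1},f_{v_2},f_{v_3},f_{v_4}$ of the squarefree ideal $I$, hence to distinct facets of $\Delta$, since the generators of a squarefree monomial ideal are in bijection with the facets of its facet complex. Thus four distinct indices yield four distinct facets, and the corollary follows. I would close by noting that the alternation argument is really just extracting the shortest even closed trail guaranteed by part $(ii)$ and checking that the degenerate length-$2$ configuration is combinatorially impossible given the definition of extended trail.
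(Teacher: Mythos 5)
Your proof is correct, but it takes a slightly different route than the one the paper implicitly intends: you go through part $(ii)$ of Proposition~\ref{eqn:luisatheorem}, whereas the corollary follows most directly from part $(i)$. From part $(i)$, any $i\in\Supp(\alpha)$ admits two distinct indices $j,k\in\Supp(\beta)$ with $F_i\cap F_j\neq\emptyset$ and $F_i\cap F_k\neq\emptyset$, and symmetrically any $j\in\Supp(\beta)$ admits two distinct indices in $\Supp(\alpha)$; hence $|\Supp(\alpha)|\geq 2$ and $|\Supp(\beta)|\geq 2$, and since $\Supp(\alpha)\cap\Supp(\beta)=\emptyset$ one gets four distinct indices, which give four distinct facets because $F_1,\dots,F_q$ are by definition the pairwise distinct facets of $\Delta$ (repetition inside $\CC_{\alpha,\beta}$ can only arise from repeated entries within $\alpha$ or within $\beta$). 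Your route through part $(ii)$ needs two extra observations that this counting argument avoids: that an extended trail, being a simplicial complex with at least three facets, must here have even length and hence length at least $4$; and that its facets are distinct --- you appeal to the internal construction in the proof of $(ii)$ for distinctness within each color, though it also follows directly from the definition of extended trail, since a facet cannot appear twice in the facet list of a simplicial complex. What your route buys is a stronger structural conclusion: the four distinct facets you exhibit form an alternating extended trail, not merely a set of four facets. Your handling of the distinct-indices-versus-distinct-facets subtlety (prompted by the bicycle example of Figure~\ref{fig:bicycle}) is exactly the right point to worry about, and your resolution of it is sound.
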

In Corollary~\ref{col:mine}, we will see that every even walk must contain a simplicial cycle.  
\begin{theorem}\label{theorem:goodleaf}
A simplicial forest contains no simplicial even walk. 
\end{theorem}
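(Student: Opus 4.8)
The plan is to argue by contradiction, reducing to the case where the ambient complex is the even walk itself and then invoking the good leaf theorem. Suppose a simplicial forest $\Delta$ contained a simplicial even walk $\CC_{\alpha,\beta}=F_{i_1},F_{j_1},\dots,F_{i_s},F_{j_s}$. The distinct facets occurring in $\CC_{\alpha,\beta}$ are facets of $\Delta$ and none contains another, so they are the facets of a subcollection $\langle\CC_{\alpha,\beta}\rangle$ of $\Delta$. Any simplicial cycle of this subcollection would involve only facets of $\Delta$ and would therefore be a simplicial cycle of $\Delta$; since $\Delta$ has none, $\langle\CC_{\alpha,\beta}\rangle$ is itself a simplicial forest. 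Hence by Theorem~\ref{theorem:herzog} it possesses a good leaf $F=F_i$, with $i\in\Supp(\alpha)\cup\Supp(\beta)$. The defining conditions of an even walk are symmetric under interchanging $\alpha$ and $\beta$ (which swaps $deg_{\alpha}$ with $deg_{\beta}$ and the two inequalities), so I may assume $i\in\Supp(\alpha)$.

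The crux is to exploit the good leaf property by choosing the right partner facet. Because $F_i$ is a good leaf, the sets $\{H\cap F_i:H\in\FF(\langle\CC_{\alpha,\beta}\rangle)\}$ are totally ordered by inclusion; in particular the finitely many sets $F_j\cap F_i$ with $j\in\Supp(\beta)$ are totally ordered, so I pick $j\in\Supp(\beta)$ for which $F_j\cap F_i$ is maximal. I then claim $deg_{\beta}(x)=0$ for every $x\in F_i\backslash F_j$: if such an $x$ lay in some $F_l$ with $l\in\Supp(\beta)$, then $x\in F_l\cap F_i\subseteq F_j\cap F_i\subseteq F_j$ by maximality, contradicting $x\notin F_j$. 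Since $x\in F_i$ and $i\in\Supp(\alpha)$ force $deg_{\alpha}(x)\geq 1$, every such $x$ satisfies $deg_{\alpha}(x)>deg_{\beta}(x)$.

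Consequently $F_i\backslash F_j\subseteq\{x\in\VV(\Delta):deg_{\alpha}(x)>deg_{\beta}(x)\}$, which directly contradicts the even walk condition $F_i\backslash F_j\not\subset\{x:deg_{\alpha}(x)>deg_{\beta}(x)\}$ for the legitimate pair $i\in\Supp(\alpha)$, $j\in\Supp(\beta)$. This contradiction shows that no even walk can be a forest, so a simplicial forest contains no simplicial even walk. The case $i\in\Supp(\beta)$ is symmetric: choosing $j\in\Supp(\alpha)$ with $F_j\cap F_i$ maximal gives $deg_{\alpha}(x)=0<deg_{\beta}(x)$ on $F_i\backslash F_j$, and reading the even walk inequality with the $\alpha$-index as $F_j$ and the $\beta$-index as $F_i$ contradicts $F_i\backslash F_j\not\subset\{x:deg_{\alpha}(x)<deg_{\beta}(x)\}$.

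I expect the main obstacle to be pinpointing the correct partner facet: the good leaf hypothesis is only useful once one selects the $\beta$-facet whose intersection with $F_i$ is largest, since that is exactly what forces every vertex of $F_i$ outside it to have $\beta$-degree zero and thereby violate the even walk inequality. The remaining bookkeeping — verifying that the subcollection inherits the forest property, that the chosen $(i,j)$ is an admissible pair for the even walk condition, and that the two symmetric cases align with the two inequalities — is routine once this choice is made. Note that Proposition~\ref{eqn:luisatheorem} is not logically required for the argument, although it guarantees that $F_i$ meets some $\beta$-facet and so confirms the situation is not vacuous.
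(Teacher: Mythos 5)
Your proof is correct and follows essentially the same route as the paper: pass to the subcollection $\langle\CC_{\alpha,\beta}\rangle$, which inherits the forest property, apply Theorem~\ref{theorem:herzog} to get a good leaf, and use the total ordering of intersections to find a $\beta$-facet $F_j$ outside of which every vertex of the good leaf has $\beta$-degree zero, violating the even walk condition. The only cosmetic difference is that the paper orders all facets by their intersection with the good leaf and takes the smallest-index $\beta$-facet, whereas you directly select the $\beta$-facet with maximal intersection — the same choice in different notation.
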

\begin{proof}
Assume the forest $\Delta$ contains an even walk $\CC_{\alpha,\beta}$ where $\alpha,\beta,\in\II_{s}$ and $s\geq 2$ is an integer. 
Since $\Delta$ is a simplicial forest so is its subcollection $\langle\CC_{\alpha,\beta}\rangle$, so by Theorem~\ref{theorem:herzog} 
$\langle\CC_{\alpha,\beta}\rangle$ contains a good leaf $F_0$. 
So we can consider the following order on the facets $F_0,\dots,F_q$ 
of $\langle\CC_{\alpha,\beta}\rangle$
\begin{eqnarray}\label{eqn:goodleaforder}
F_{q}\cap F_0\subseteq\dots \subseteq F_{2}\cap F_0\subseteq F_{1}\cap F_0.
\end{eqnarray}
Without loss of generality we suppose $0\in\Supp(\alpha)$. Since $\Supp(\beta)\neq \emptyset$, we can pick $j\in\{1,\dots,q\}$ to be the smallest index with $F_j\in\Supp(\beta)$.
Now if $x\in F_0\backslash F_j$, by (\ref{eqn:goodleaforder}) we will have $deg_{\alpha}(x)\geq 1$ and $deg_{\beta}(x)=0$, which shows that 
$$F_0\backslash F_j\subset\{x\in V(\CC_{\alpha,\beta});deg_{\alpha}(x)>deg_{\beta}(x)\},$$
a contradiction.

\end{proof}
\begin{col}\label{col:mine}
 Every simplicial even walk contains a simplicial cycle.  
\end{col}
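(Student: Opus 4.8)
The plan is to obtain this corollary as the contrapositive of Theorem~\ref{theorem:goodleaf}. Recall that a simplicial complex is a simplicial forest precisely when it contains no simplicial cycle; thus the statement ``every even walk contains a simplicial cycle'' is logically equivalent to ``no even walk can live inside a simplicial forest,'' which is exactly the content of Theorem~\ref{theorem:goodleaf}. So the whole proof should amount to unwinding this equivalence.

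Concretely, I would argue by contradiction. Suppose $\CC_{\alpha,\beta}=F_{i_1},F_{j_1},\dots,F_{i_s},F_{j_s}$ is an even walk that contains no simplicial cycle. The object to examine is the subcollection $\langle\CC_{\alpha,\beta}\rangle$ whose facets are the distinct facets appearing in the walk. One preliminary step is to confirm that these $F_i$ really are the facets of $\langle\CC_{\alpha,\beta}\rangle$: since the walk comes from a squarefree monomial ideal with $\gcd=1$, the generators, and hence their corresponding faces, are pairwise incomparable, so no facet of the walk gets absorbed into another when one passes to the subcollection. By hypothesis $\langle\CC_{\alpha,\beta}\rangle$ then contains no simplicial cycle, so by definition it is a simplicial forest. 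Applying Theorem~\ref{theorem:goodleaf} with $\Delta=\langle\CC_{\alpha,\beta}\rangle$ gives that $\langle\CC_{\alpha,\beta}\rangle$ contains no simplicial even walk. But all the data entering Definition~\ref{def:scew} — the vertices of $\CC_{\alpha,\beta}$ and the degrees $deg_{\alpha}(x),deg_{\beta}(x)$ — depend only on the facets occurring in the walk, so $\CC_{\alpha,\beta}$ is itself an even walk of $\langle\CC_{\alpha,\beta}\rangle$. This contradicts the previous sentence, and the contradiction forces the even walk to contain a simplicial cycle.

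The argument is genuinely short, so there is no serious obstacle; the only point needing a little care is the bookkeeping in the middle step, namely checking that the even-walk structure is intrinsic to the facets of the walk and therefore survives the restriction to the subcollection $\langle\CC_{\alpha,\beta}\rangle$. Once it is clear that $\CC_{\alpha,\beta}$ qualifies as an even walk of $\langle\CC_{\alpha,\beta}\rangle$ and that passing to this subcollection neither creates nor destroys simplicial cycles among the facets of the walk, the implication is immediate from Theorem~\ref{theorem:goodleaf}.
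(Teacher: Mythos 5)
Your proof is correct and is essentially the paper's own argument: the corollary is stated without a separate proof precisely because it is the contrapositive of Theorem~\ref{theorem:goodleaf} applied to the subcollection $\langle\CC_{\alpha,\beta}\rangle$, and your observation that the even-walk conditions depend only on the facets occurring in the walk is the only bookkeeping needed. One small repair: the pairwise incomparability of the walk's facets follows simply from their being facets (maximal faces) of the ambient complex $\Delta$, not from the $\gcd=1$ normalization, which by itself does not imply incomparability.
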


An even walk  is not necessarily an extended trail. For instance see the following example.
\begin{example}\label{ex,counterexample}
Let $\alpha=(1,3,5,7),\beta=(2,4,6,8)$ and $\CC_{\alpha,\beta}=F_1,\dots,F_8$ as in Figure~\ref{fig:newpicture}. It can easily be seen  that $\CC_{\alpha,\beta}$ is an even walk of distinct facets but 
$\CC_{\alpha,\beta}$ is not an extended trail.  
\begin{figure}[H] 
\centering
\includegraphics[width=2 in]{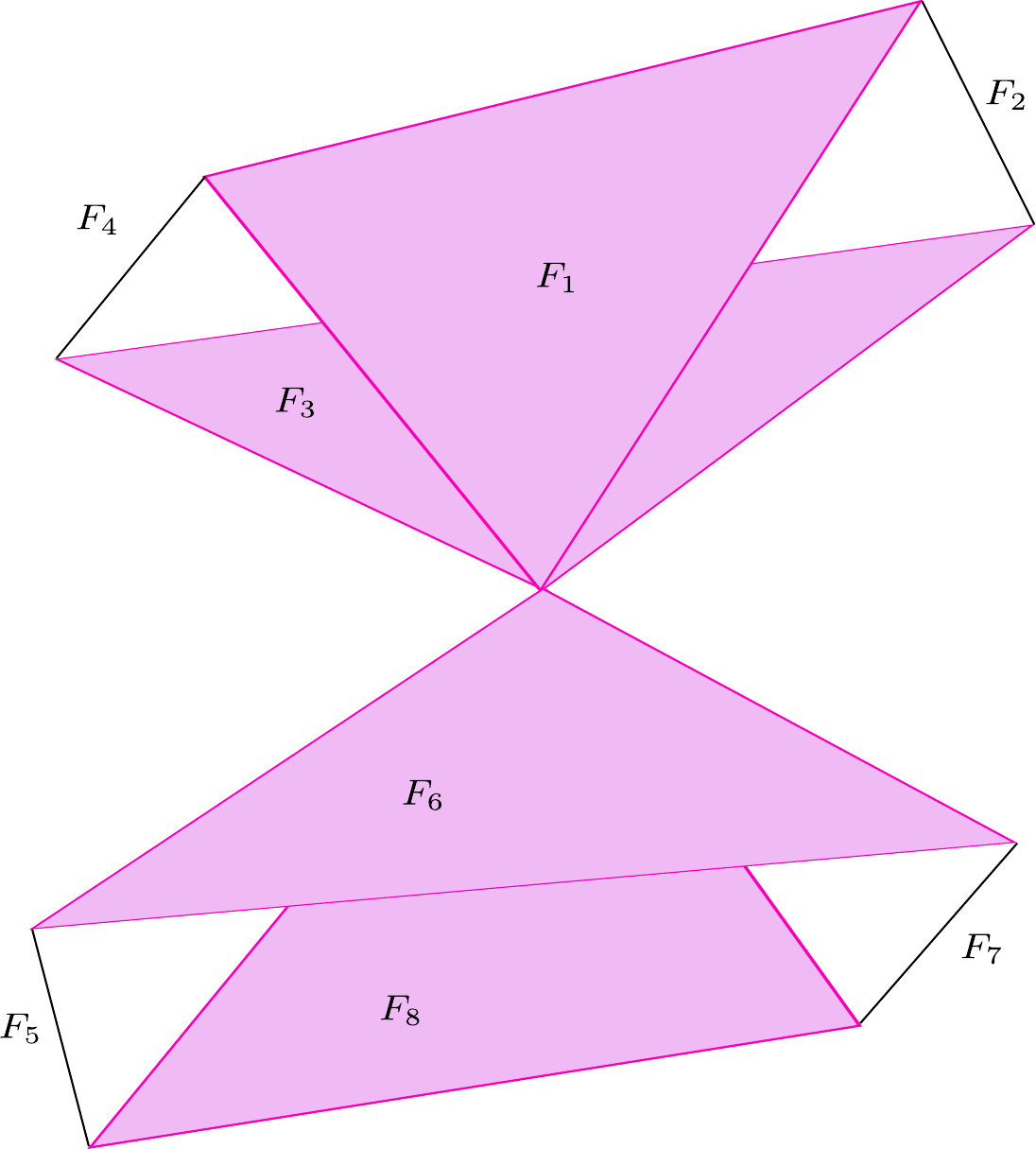}
\caption{An even walk which is not an extended trail.}\label{fig:newpicture}
\end{figure}
The main point here is that we do not require that $F_{i}\cap F_{i+1}\neq \emptyset$ in an even walk which is necessary condition for 
extended trails. For example $F_4\cap F_5\neq \emptyset$ in this case.  
\end{example}
On the other hand, every even-length special cycle is an even walk.   
 \begin{prop}[\bf{Even special cycles are even walks}] \label{eqn:special cycle}
If $F_1,\dots, F_{2s}$ is a special cycle (under the written order) then it is an even walk under the same order. 
 \end{prop}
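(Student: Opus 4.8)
The plan is to exhibit the special cycle directly as an even walk by choosing the obvious index splitting. I would set $\alpha=(1,3,\dots,2s-1)$ and $\beta=(2,4,\dots,2s)$, so that $\CC_{\alpha,\beta}=F_1,F_2,\dots,F_{2s}$ reproduces the given order and, crucially, consecutive facets always lie in opposite supports: odd indices belong to $\Supp(\alpha)$ and even indices to $\Supp(\beta)$. Since the facet ideal is squarefree, for any vertex $x$ the value $deg_{\alpha}(x)$ is just the number of odd-indexed facets containing $x$, and $deg_{\beta}(x)$ the number of even-indexed facets containing $x$. With this setup the task reduces to verifying the two defining inequalities of Definition~\ref{def:scew} for every $i\in\Supp(\alpha)$ and $j\in\Supp(\beta)$.

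The key input is the special-cycle hypothesis: for each $k$ (indices read modulo $2s$) there is a vertex $x_{k}\in F_k\cap F_{k+1}$ that lies in no other facet of $\CC_{\alpha,\beta}$, by the condition $F_k\cap F_{k+1}\not\subset\bigcup_{j\notin\{k,k+1\}}F_j$. Because $F_k$ and $F_{k+1}$ have opposite parity, exactly one of them is counted by $deg_{\alpha}$ and one by $deg_{\beta}$; hence $deg_{\alpha}(x_k)=deg_{\beta}(x_k)=1$. In particular every such $x_k$ is \emph{balanced}, i.e. it lies in neither $\{x:deg_{\alpha}(x)>deg_{\beta}(x)\}$ nor $\{x:deg_{\alpha}(x)<deg_{\beta}(x)\}$, and this holds regardless of whether some of the facets happen to coincide as sets, since the degrees are counted by index.

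To check $F_i\setminus F_j\not\subset\{x:deg_{\alpha}(x)>deg_{\beta}(x)\}$ I would produce a balanced vertex inside $F_i\setminus F_j$. If $j\notin\{i-1,i+1\}$, then $x_i\in F_i\cap F_{i+1}$ works, because it is special to the pair $F_i,F_{i+1}$ and $j\neq i,i+1$, so $x_i\notin F_j$, giving $x_i\in F_i\setminus F_j$. If $j=i+1$, I switch to $x_{i-1}\in F_{i-1}\cap F_i$, which lies in $F_i$ but not in $F_{i+1}=F_j$; and if $j=i-1$, I use $x_i$ again. (This case split is legitimate since $s\geq 2$ forces $i-1\not\equiv i+1\pmod{2s}$, so the two incident special vertices are genuinely available.) The symmetric inequality $F_j\setminus F_i\not\subset\{x:deg_{\alpha}(x)<deg_{\beta}(x)\}$ follows verbatim after interchanging the roles of $\alpha$ and $\beta$, using the balanced vertices $x_{j-1}$ and $x_j$ incident to $F_j$.

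The main obstacle, modest as it is, lies in the adjacent cases $j=i\pm 1$: there the natural candidate $x_i$ may itself belong to $F_j$, so one must carefully switch to the special vertex of the \emph{other} edge incident to $F_i$ and confirm it avoids $F_j$ from the special-cycle property. Once the parity observation is in place—forcing the relevant degrees to equal $1$ on each special vertex—the verification in each case collapses to the single, immediate check that a named special vertex is not contained in $F_j$.
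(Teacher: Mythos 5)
Your proposal is correct and follows essentially the same route as the paper's proof: both choose $\alpha=(1,3,\dots,2s-1)$, $\beta=(2,4,\dots,2s)$ and use the special-cycle condition to produce, for the neighbor $F_h$ of $F_i$ with $h\neq j$, a vertex $z\in F_i\cap F_h$ lying in no other facet, which by the parity of consecutive indices satisfies $deg_{\alpha}(z)=deg_{\beta}(z)=1$ and hence witnesses $F_i\setminus F_j\not\subset\{x: deg_{\alpha}(x)>deg_{\beta}(x)\}$. Your explicit case split on $j\in\{i-1,i+1\}$ versus $j\notin\{i-1,i+1\}$ is just the unwound form of the paper's single step ``pick $h\in\{i-1,i+1\}$ with $h\neq j$,'' and your direct verification is equivalent to the paper's argument by contradiction.
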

\begin{proof}
Let $\alpha=(1,3,\dots,2s-1)$ and $\beta=(2,4,\dots,2s)$, and set $\CC_{\alpha,\beta}=F_1,\dots,F_{2s}$. Suppose $\CC_{\alpha,\beta}$ is not an even walk,
so there is $i\in\Supp(\alpha)$ and $j\in \Supp(\beta)$ such that at least one of the following conditions holds
\begin{eqnarray}\label{eqn:one}
F_i\backslash F_j\subseteq\{x\in \VV(\CC_{\alpha,\beta}):deg_{\alpha}(x)>deg_{\beta}(x)\}\\
\nonumber
F_j\backslash F_i\subseteq\{x\in \VV(\CC_{\alpha,\beta}):deg_{\alpha}(x)<deg_{\beta}(x)\}. 
\end{eqnarray}
Without loss of generality we can assume that the first condition holds. 
Pick $h\in\{i-1,i+1\}$ such that $h\neq j$. Then by definition of special cycle there is a vertex $z\in F_i\cap F_{h}$ and $z\notin F_{l}$ for $l\notin \{i,h\}$. In particular, 
$z\in F_i\backslash F_j$, but $deg_{\alpha}(z)=deg_{\beta}(z)=1$ which contradicts (\ref{eqn:one}).  
 
\end{proof}
The converse of Proposition~\ref{eqn:special cycle} is not true: not every even walk is a special cycle, see for example Figure [\ref{figure2}] or Figure [\ref{fig:newpicture}] which are not 
even extended trails. 
But one can show that it is true for even walks with four facets (see~\cite{Alilooee2014}).
\subsection{The case of graphs}

We demonstrate that Definition~\ref{def:scew} in dimension $1$ restricts to closed even walks in graph theory. For more details on the graph theory mentioned in 
this section we refer the reader to \cite{west2001}.
\begin{defn}\label{def:graph}
Let $G=(\VV,E)$ be a graph (not necessarily simple) where $\VV$ is a nonempty set of vertices and $E$ is a set of edges. 
A {\bf walk} of length $n$ in $G$ is a list $e_{1},e_{2},\dots,e_{n}$ of not necessarily distinct edges such that 
$$e_{i}=\{x_{i},x_{i+1}\}\in E\hspace{.4 in} \mbox{for each $i\in \{1,\dots,n-1\}$}.$$
A walk is called {\bf closed} if its endpoints are the same i.e. $x_{1}=x_{n}$. The length of a walk $\mathcal{W}$ is denoted by 
$\ell(\mathcal{W})$. A walk with no repeated edges is called a {\bf trail} and a walk with no repeated vertices or edges is called a {\bf path}.  
A closed walk with no repeated vertices or edges allowed, other than the repetition of the starting and ending vertex, is called a {\bf cycle}.  

\end{defn} 

\begin{lem}[Lemma $1.2.15$ and Remark~$1.2.16$~\cite{west2001}]\label{lem:west}
 Let $G$ be a simple graph. Then we have 
\begin{itemize}
\item Every closed odd walk contains a cycle.
\item Every closed even walk which has at least one non-repeated edge contains a cycle.
\end{itemize} 
\end{lem}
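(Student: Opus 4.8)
The plan is to prove both items by induction on the length of the walk, after first isolating one elementary fact: \emph{any walk joining two distinct vertices $u$ and $v$ in a simple graph contains a path from $u$ to $v$}. This is the standard ``shortcutting'' observation --- if some vertex is repeated along the walk, excise the closed portion lying strictly between its two occurrences; each excision strictly decreases the length, so after finitely many steps no vertex repeats and what remains is a $u$-to-$v$ path whose edges are among those of the original walk. I would establish this preliminary first, since both bullets reduce to it or to a close variant.

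For the odd case I would argue by strong induction on the odd length $n$. Write the closed walk as $v_0,v_1,\dots,v_n=v_0$. If it has no repeated vertex apart from the forced coincidence $v_0=v_n$, then it is already a cycle (necessarily of odd length $\geq 3$, since a simple graph has neither loops nor multiple edges). Otherwise choose a nontrivial repetition $v_i=v_j$ with $0\leq i<j\leq n$ and $(i,j)\neq(0,n)$, and split the walk into the two closed sub-walks $v_i,v_{i+1},\dots,v_j$ and $v_0,\dots,v_i,v_j,\dots,v_n$. Their lengths are positive, each strictly smaller than $n$, and they sum to $n$; since $n$ is odd, one of the two is a strictly shorter closed odd walk, and the induction hypothesis produces a cycle inside it, hence inside the original walk.

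For the even case I would instead exploit the distinguished non-repeated edge $e=\{a,b\}$ directly. Deleting the single occurrence of $e$ leaves a walk from $b$ to $a$ that avoids $e$ entirely (because $e$ occurred only once). Since $a\neq b$ (no loops), the preliminary lemma extracts a path $Q$ from $b$ to $a$ among the remaining edges, and $Q$ still avoids $e$. Then $Q$ together with $e$ closes up to a cycle; the only point requiring care is that this cycle has length $\geq 3$. If $Q$ had length $1$ it would be a single edge joining $a$ and $b$, which in a simple graph must be $e$ itself, contradicting $e\notin Q$. Hence $Q$ has length $\geq 2$ and $Q$ together with $e$ is a genuine cycle.

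The hardest point, and the reason the two bullets cannot be collapsed into a single argument, is the role of the hypothesis in the even case. A closed even walk need not contain any cycle at all (traversing one edge back and forth is the obstruction), so the non-repeated edge is indispensable, and the crux is exactly the length-$\geq 3$ verification above, which is precisely where simplicity of the graph is used. Conversely, the non-repeated-edge trick cannot simply be reused for odd walks: an odd closed walk may have \emph{every} edge repeated --- for instance an Eulerian closed walk of the triangle that traverses each of its three edges three times has odd length $9$ and no non-repeated edge --- so the parity-driven splitting argument is genuinely needed there.
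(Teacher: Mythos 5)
The paper offers no proof of this lemma at all --- it is quoted directly from West \cite{west2001} (Lemma 1.2.15 and Remark 1.2.16) --- so there is no internal argument to compare against; your proposal is correct and is essentially the standard argument from that source: strong induction with splitting at a repeated vertex for the odd case (using that the two pieces have lengths summing to an odd number), and extraction of a path avoiding the unique non-repeated edge for the even case. The one place where simplicity of $G$ is genuinely needed --- ruling out a cycle of length $2$ when the extracted path from $b$ to $a$ has length $1$ --- is handled correctly, and your counterexamples (the back-and-forth walk, and the all-edges-repeated odd walk on the triangle) accurately explain why the two bullets require different hypotheses and different arguments.
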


Note that in the graph case the special and simplicial cycles are the ordinary cycles. But extended trails in our definition are not necessarily
cycles in the case of graphs or even a trail. For instance the graph in Figure~\ref{fig:bergecycle} is an extended trail, which is not neither a cycle nor a trail, 
but contains one cycle. This is the case in general.
 \begin{figure}[H]
 \centering
\includegraphics[width=1.5 in]{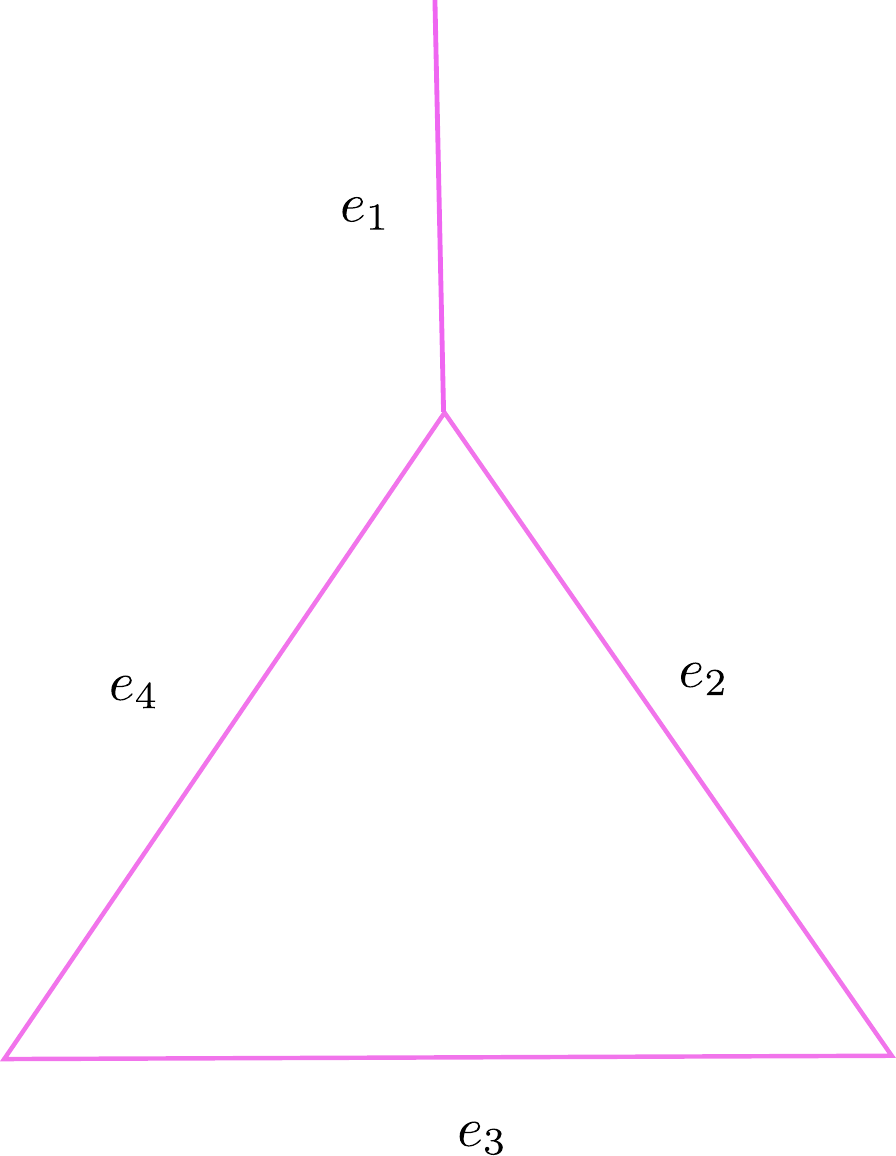} 
\caption{}\label{fig:bergecycle}
\end{figure}

\begin{theorem}[\bf{Euler's Theorem}, \cite{west2001}]\label{thm:euler}
If $G$ is a connected graph, then $G$ is a closed walk with no repeated edges if and only if the degree of every vertex of $G$ is even.
\end{theorem}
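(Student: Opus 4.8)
The final statement is Euler's Theorem: a connected graph $G$ is a closed walk with no repeated edges (an Eulerian circuit) if and only if every vertex has even degree. This is a classical result, so my plan would be to reconstruct its standard proof rather than invent something new.

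The plan is to prove the two directions separately. The forward direction (necessity) is the easy one: First I would suppose $G$ admits a closed trail $\mathcal{W}$ using every edge. For any vertex $v$, I would traverse $\mathcal{W}$ and observe that each time the walk passes through $v$ it uses exactly two distinct edges incident to $v$ (one to arrive, one to leave), and since the walk is closed the starting vertex is also entered and exited, so no edge is left unpaired. Because $\mathcal{W}$ uses every edge exactly once (no repeats), the edges at $v$ partition into these arrival/departure pairs, giving $\deg(v)$ even. The reverse direction (sufficiency) is where the real work lies, and I would argue it by induction on the number of edges, or equivalently by an extremal/greedy argument. Assuming every vertex has even degree and $G$ is connected with at least one edge, I would take a maximal trail $\mathcal{W}$ (one that cannot be extended), show it must be closed, and then show it must use all edges.

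Concretely, for the sufficiency direction I would start a trail at any vertex and keep walking along unused edges as long as possible. The key local observation is that whenever the trail enters a vertex $v$ other than its start, it has used an odd number of edges at $v$ so far; since $\deg(v)$ is even, an unused edge remains and the trail can continue. Hence the trail can only get stuck back at its starting vertex, so a maximal trail $\mathcal{W}$ is closed. If $\mathcal{W}$ does not already use all edges, then by connectivity some vertex $u$ on $\mathcal{W}$ is incident to an unused edge; deleting the edges of $\mathcal{W}$ leaves a graph in which every vertex still has even degree (we removed an even number of edges at each vertex, by the closed-trail argument applied to $\mathcal{W}$), so by induction the component containing $u$ has a closed trail $\mathcal{W}'$, and I would splice $\mathcal{W}'$ into $\mathcal{W}$ at $u$ to get a strictly longer closed trail, contradicting maximality. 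Therefore $\mathcal{W}$ uses every edge and is the desired Eulerian closed walk.

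The main obstacle, and the step demanding the most care, is the splicing/induction bookkeeping in the sufficiency direction: I must verify that removing the edges of the closed trail $\mathcal{W}$ preserves the even-degree property on every vertex (which follows because a closed trail contributes an even number of incident edges at each vertex it meets), confirm the relevant subgraph is nonempty and that connectivity of $G$ guarantees an attachment vertex $u$ shared between $\mathcal{W}$ and the leftover edges, and then argue carefully that the concatenation of $\mathcal{W}$ and $\mathcal{W}'$ at $u$ is again a single closed trail with no repeated edges. Since this is a textbook result I expect no genuine conceptual difficulty, only the need to phrase the maximal-trail and splicing arguments precisely.
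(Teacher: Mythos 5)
Your proof is correct: the necessity direction via pairing arrival/departure edges at each vertex, and the sufficiency direction via a maximal trail that must close up, followed by the splicing/induction argument, is the standard and complete proof of Euler's theorem. Note, however, that the paper itself gives no proof of this statement --- it is quoted as a classical result from West's textbook \cite{west2001} (where essentially your argument appears), so there is nothing in the paper to compare against; your reconstruction fills in exactly what the citation delegates to the literature.
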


\begin{lem}\label{lem:newlem}
Let $G$ be a simple graph and let  $\CC=e_{i_1},\dots,e_{i_{2s}}$ be a sequence of not necessarily distinct edges of $G$ where $s\geq 2$ and  $e_{i}=\{x_{i},x_{i+1}\}$ and $f_i=x_ix_{i+1}$ 
for $1\leq i\leq 2s$.  
Let $\alpha=(i_1,i_3,\dots,i_{2s-1})$ and $\beta=(i_2,i_4,\dots,i_{2s})$. Then 
$\CC$ is a closed even walk if and only if $f_{\alpha}=f_{\beta}$.  
\end{lem}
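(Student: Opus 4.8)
The plan is to prove both implications at once, by directly computing the two monomials $f_\alpha$ and $f_\beta$ in terms of the vertex sequence of the walk and then reading off closure as a cancellation of monomials. First I would pin down what the hypothesis already provides. Writing the edges of $\CC$ in order as $e_1,\dots,e_{2s}$ with $e_i=\{x_i,x_{i+1}\}$, the label $x_{i+1}$ shared between $e_i$ and $e_{i+1}$ encodes that $\CC$ is a walk on the vertices $x_1,\dots,x_{2s+1}$; the remaining content of the word ``closed'' is exactly the equality $x_1=x_{2s+1}$, and since the length $2s$ is even, a closed walk here is automatically a closed \emph{even} walk. So the whole statement reduces to: $x_1=x_{2s+1}$ if and only if $f_\alpha=f_\beta$.

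Next I would compute the two products, where (after this relabelling) $\alpha$ selects the odd positions and $\beta$ the even positions, and $f_i=x_ix_{i+1}$. The two products telescope:
\[
f_\alpha = f_1 f_3 \cdots f_{2s-1} = (x_1 x_2)(x_3 x_4)\cdots(x_{2s-1}x_{2s}) = x_1 x_2 \cdots x_{2s},
\]
\[
f_\beta = f_2 f_4 \cdots f_{2s} = (x_2 x_3)(x_4 x_5)\cdots(x_{2s}x_{2s+1}) = x_2 x_3 \cdots x_{2s+1}.
\]
Both monomials contain the common factor $M = x_2 x_3 \cdots x_{2s}$, so the clean form to carry forward is $f_\alpha = M\,x_1$ and $f_\beta = M\,x_{2s+1}$.

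The two implications then fall out immediately. If $\CC$ is closed, then $x_1 = x_{2s+1}$, and the formulas give $f_\alpha = M x_1 = M x_{2s+1} = f_\beta$. Conversely, if $f_\alpha = f_\beta$, then $M x_1 = M x_{2s+1}$ in the polynomial ring $R$; since $R$ is an integral domain and $M \neq 0$, I may cancel $M$ to obtain $x_1 = x_{2s+1}$, i.e. $\CC$ is closed, and of even length $2s$.

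The step that needs the most care is this backward cancellation: one must be sure that the equality $f_\alpha=f_\beta$ of monomials really forces the single-variable equality $x_1=x_{2s+1}$, rather than merely some rearrangement of exponents among the other variables. This is legitimate precisely because the telescoping identities above are exact equalities of monomials (counted with multiplicity), so the two sides share the factor $M$ literally and the surviving quotients $x_1$ and $x_{2s+1}$ must agree. The hypothesis that $G$ is simple guarantees each $f_i$ is a squarefree degree-two monomial $x_i x_{i+1}$ with $x_i \neq x_{i+1}$, so there is no loop to corrupt this bookkeeping. Beyond stating the cancellation cleanly, I do not expect any genuine obstacle.
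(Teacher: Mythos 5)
Your argument proves the lemma only under a reading in which the labelling $e_i=\{x_i,x_{i+1}\}$ is itself a hypothesis, i.e.\ the sequence $\CC$ is assumed from the outset to be a walk in its given order, so that ``closed even walk'' collapses to the single equation $x_1=x_{2s+1}$. The statement's notation admittedly invites that reading, but it trivializes the lemma and is not what the paper proves or needs. The whole point of the converse direction is that \emph{nothing} is assumed about how the $2s$ edges fit together beyond the monomial identity $f_{\alpha}=f_{\beta}$: this is exactly the situation in Corollary~\ref{them:graphwalk}, where the lemma is applied to a simplicial even walk $\CC_{\alpha,\beta}$, and in such a sequence consecutive edges need not share a vertex at all (the interleaved order $F_{i_1},F_{j_1},F_{i_2},F_{j_2},\dots$ generally scrambles any walk order; compare the repeated-edge example of Figure~\ref{fig:bicycle}, and the paper's explicit warning that even walks need not satisfy $F_i\cap F_{i+1}\neq\emptyset$). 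So the shared-endpoint pattern on which your telescoping identities $f_{\alpha}=Mx_1$ and $f_{\beta}=Mx_{2s+1}$ depend is precisely what must be \emph{produced} in the converse; assuming it begs the question, and the cancellation $Mx_1=Mx_{2s+1}\Rightarrow x_1=x_{2s+1}$ cannot manufacture a walk ordering out of a bare sequence of edges.

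The missing idea is the one the paper imports from graph theory. Given only $f_{\alpha}=f_{\beta}$, one gets $deg_{\alpha}(x)=deg_{\beta}(x)$ for every vertex $x$, hence every vertex of the multigraph spanned by the $2s$ edges (repeated edges relabelled as parallel edges) has even degree $deg_{\alpha}(x)+deg_{\beta}(x)=2\,deg_{\alpha}(x)$; Euler's Theorem (Theorem~\ref{thm:euler}) then yields a closed trail through all $2s$ edges, i.e.\ a closed even walk, after which one reverts to the original labels. This is why Theorem~\ref{thm:euler} and Lemma~\ref{lem:west} are stated immediately before this lemma. Note also that Euler's Theorem requires connectedness, which the lemma's statement leaves implicit but which is an explicit hypothesis of Corollary~\ref{them:graphwalk} where the lemma is used; your approach cannot detect this subtlety, and without connectedness the converse is genuinely false: for two vertex-disjoint $4$-cycles, splitting each cycle's edges alternately between $\alpha$ and $\beta$ gives $f_{\alpha}=f_{\beta}$, yet the eight edges form no closed walk. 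Your forward direction agrees with the paper's, which dismisses it as immediate from the definition; the gap is entirely in the converse.
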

\begin{proof}
$(\Longrightarrow)$ This direction is clear from the definition of closed even walks. 

$(\Longleftarrow)$ We can give to each repeated edge in $\CC$ a new label and consider $\CC$ as a multigraph (a graph with multiple edges). The condition $f_{\alpha}=f_{\beta}$ implies that 
every $x\in\VV(\CC)$ has even degree, as a vertex of the multigrpah $\CC$ (a graph containing edges that are incident to the same two vertices). Theorem~\ref{thm:euler} implies that $\CC$ is a closed even walk with no repeated edges. Now we revert back to the 
original labeling of  the edges of $\CC$ (so that repeated edges appear again) and then since $\CC$ has even length we are done.     
\end{proof}

To prove the main theorem of this section (Theorem~\ref{them:graphwalk}) we need the following lemma.
\begin{lem}\label{lem,mynewlemma}
Let $\CC=\CC_{\alpha,\beta}$ be a $1$-dimensional simplicial even walk and $\alpha,\beta\in\II_{s}$. If there is $x\in \VV(\CC)$ for which $deg_{\beta}(x)=0$ 
(or $deg_{\alpha}(x)=0$), then we have $deg_{\beta}(v)=0$ ($deg_{\alpha}(v)=0$)
for all $v\in \VV(\CC)$.    
\end{lem}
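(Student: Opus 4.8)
The plan is to argue by contradiction and show that a vertex of $\beta$-degree zero is incompatible with the even-walk inequalities of Definition~\ref{def:scew}; the $deg_{\alpha}$ statement then follows by interchanging the roles of $\alpha$ and $\beta$ (and of the two inequalities). Throughout I would work in the underlying graph of $\CC$: since the complex is $1$-dimensional and squarefree, $deg_{\alpha}(v)$ is just the number of $\alpha$-facets incident to $v$ (counted with multiplicity), and similarly for $deg_{\beta}$. Write $P=\{v\in\VV(\CC):deg_{\alpha}(v)>deg_{\beta}(v)\}$. The first observation is that a vertex $x$ with $deg_{\beta}(x)=0$ lies on no $\beta$-facet, hence only on $\alpha$-facets; in particular $deg_{\alpha}(x)\geq 1$, so $deg_{\alpha}(x)>deg_{\beta}(x)$ and $x\in P$. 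More generally, every vertex of $\beta$-degree zero lies in $P$.

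The main mechanism is a propagation step: I claim that if $deg_{\beta}(x)=0$ and $F_i=\{x,y\}$ is any $\alpha$-edge through $x$, then $deg_{\beta}(y)=0$ as well. Indeed, suppose some $\beta$-edge $F_j$ contains $y$. Since $deg_{\beta}(x)=0$ we have $x\notin F_j$, so $F_i\cap F_j=\{y\}$ and $F_i\backslash F_j=\{x\}$. But $x\in P$, so $F_i\backslash F_j\subseteq P$, contradicting the even-walk condition $F_i\backslash F_j\not\subset P$ of Definition~\ref{def:scew} for the pair $i\in\Supp(\alpha)$, $j\in\Supp(\beta)$. Hence $y$ lies on no $\beta$-edge and $deg_{\beta}(y)=0$, so the property ``$deg_{\beta}=0$'' spreads along $\alpha$-edges; in particular the whole edge $F_i=\{x,y\}$ lies in $P$.

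The step I expect to be the crux is upgrading this local statement to a global one. Having produced a single $\alpha$-edge $F_i=\{x,y\}\subseteq P$, I would test it against an arbitrary $\beta$-edge $F_j$ (one exists since $s\geq 2$): because $deg_{\beta}(x)=deg_{\beta}(y)=0$ while every vertex of $F_j$ has positive $\beta$-degree, $F_j$ is disjoint from $F_i$, whence $F_i\backslash F_j=F_i\subseteq P$, again violating $F_i\backslash F_j\not\subset P$. The delicate point is exactly the choice of the companion $\beta$-facet $F_j$: the even-walk condition is only a non-containment for each individual pair $(i,j)$, so $F_j$ must be selected to force $F_i\backslash F_j$ inside $P$ — the facet through $y$ in the propagation step, and any disjoint $\beta$-edge in the global step. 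As an alternative to this last move one could invoke Proposition~\ref{eqn:luisatheorem}(i), which guarantees that every $\alpha$-facet meets some $\beta$-facet and hence directly contradicts $F_i\subseteq P$ being disjoint from all $\beta$-edges.

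The contradiction obtained shows that in a $1$-dimensional even walk no vertex can have $\beta$-degree zero (and, symmetrically, none can have $\alpha$-degree zero, using the inequality $F_j\backslash F_i\not\subset\{deg_{\alpha}<deg_{\beta}\}$). This establishes the stated implication and, more usefully for what follows, records that every vertex of $\CC$ has both $deg_{\alpha}\geq 1$ and $deg_{\beta}\geq 1$.
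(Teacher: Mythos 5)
Your proposal is correct, but its endgame is genuinely different from the paper's. The propagation step is common to both: your claim that a $\beta$-degree-zero endpoint of an $\alpha$-edge forces the other endpoint to have $\beta$-degree zero is, almost verbatim, the displayed implication with which the paper's proof begins, proved by the same device of pairing the $\alpha$-edge with a putative $\beta$-edge through the other endpoint. After that the two arguments part ways. The paper globalizes by connectivity: it takes a path from $x$ to an arbitrary vertex $y$ and applies the propagation step edge by edge along that path, thereby proving the implication exactly as stated (note this silently uses that $\CC$ is connected, a hypothesis which Definition~\ref{def:scew} does not impose and the lemma does not state; connectedness is only assumed later, in Theorem~\ref{col:newcol}). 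You instead apply propagation once to obtain a whole $\alpha$-edge $F_i=\{x,y\}$ both of whose vertices have $\beta$-degree zero, and then play $F_i$ against an arbitrary $\beta$-edge $F_j$: since every vertex of $F_j$ has positive $\beta$-degree, $F_i$ and $F_j$ are disjoint, so $F_i\setminus F_j=F_i\subseteq\{z\in\VV(\CC): deg_{\alpha}(z)>deg_{\beta}(z)\}$, contradicting Definition~\ref{def:scew} outright (or, as you note, contradicting Proposition~\ref{eqn:luisatheorem}(i)). Thus you show the lemma's hypothesis can never occur in a $1$-dimensional even walk, and the stated implication holds vacuously. That is logically sound and in fact stronger: it shows every vertex of a $1$-dimensional even walk has $deg_{\alpha}\geq 1$ and $deg_{\beta}\geq 1$, it requires no connectivity assumption, and it anticipates exactly how the lemma is used downstream, since in Theorem~\ref{col:newcol} the conclusion $deg_{\beta}\equiv 0$ is immediately played against $\Supp(\beta)\neq\emptyset$ to produce the very contradiction you derive directly. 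What the paper's route buys is the literal propagation statement, established path by path, without needing to decide whether its hypothesis is satisfiable.
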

\begin{proof}
 First we show the following statement.
\begin{eqnarray}\label{stat:new}
e_{i}=\{w_i,w_{i+1}\}\in E(\CC) &\mbox{and}& deg_{\beta}(w_i)=0\Longrightarrow deg_{\beta}(w_{i+1})=0. 
\end{eqnarray}
where $E(\CC)$ is the edge set of $\CC$.

Suppose $deg_{\beta}(w_{i+1})\neq 0$. Then there is $e_{j}\in E(\CC)$ such that $j\in\Supp(\beta)$ and $w_{i+1}\in e_{j}$. On the other hand since $w_i\in e_i$ and $deg_{\beta}(w_i)=0$ we can 
conclude $i\in\Supp(\alpha)$ and thus $deg_{\alpha}(w_i)> 0$. Therefore, we have 
$$e_i\backslash e_j=\{w_i\}\subseteq \{z:deg_{\alpha}(z)>deg_{\beta}(z)\}$$
and it is a contradiction. So we must have $deg_{\beta}(w_{i+1})=0$.

Now we proceed to the proof of our statement. 
Pick  $y\in \VV(\CC)$ such that $y\neq x$. Since $\CC$ is connected we can conclude there is a path $\gamma=e_{i_1},\dots,e_{i_t}$ in $\CC$ in which we have
\begin{itemize}
\item $e_{i_j}=\{x_{i_j},x_{i_{j+1}}\}$ for $j=1,\dots,t$;
\item $x_{i_1}=x$ and $x_{i_{t+1}}=y$.
\end{itemize}
Since $\gamma$ is a path it has neither repeated vertices nor repeated edges. Now note that since $deg_{\beta}(x)=deg_{\beta}(x_{i_1})=0$ 
and $\{x_{i_1},x_{i_2}\}\in E(\CC)$ from (\ref{stat:new}) we have $deg_{\beta}(x_{i_2})=0$. By repeating a similar argument we have 
\begin{eqnarray*}
deg_{\beta}(x_{i_j})=0&\mbox{for $j=1,2,\dots,t+1$.}
\end{eqnarray*}
In particular we have $deg_{\beta}(x_{i_{t+1}})=deg_{\beta}(y)=0$ and we are done. 
\end{proof}
We now show that a simplicial even walk in a graph (considering a graph as a $1$-dimensional simplicial complex) 
 is a closed even walk in that graph as defined in Definition~\ref{def:graph}. 
\begin{theorem}\label{col:newcol}
Let $G$ be a simple graph with edges $e_1,\dots,e_q$. Let 
$e_{i_1},\dots,e_{i_{2s}}$ be a sequence of edges of $G$ such that $\left\langle e_{i_1},\dots,e_{i_{2s}}\right\rangle$ is a connected subgraph of 
$G$ and $\{i_1,i_3,\dots,i_{2s-1}\}\cap\{i_2,i_4,\dots,i_{2s}\}=\emptyset$. Then  
$e_{i_1},\dots,e_{i_{2s}}$ is a simplicial even walk 
if and only if 
$$\{x\in \VV(\CC_{\alpha,\beta}):   deg_{\alpha}(x) > deg_{\beta}(x)\}=\{x\in \VV(\CC_{\alpha,\beta}):   deg_{\alpha}(x) < deg_{\beta}(x)\}=\emptyset$$ 
\end{theorem}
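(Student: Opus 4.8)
The plan is to notice that the displayed right-hand condition simply says $deg_{\alpha}(x)=deg_{\beta}(x)$ for every vertex $x$ of $\CC=\CC_{\alpha,\beta}$. Writing $P=\{x\in\VV(\CC): deg_{\alpha}(x)>deg_{\beta}(x)\}$ and $N=\{x\in\VV(\CC): deg_{\alpha}(x)<deg_{\beta}(x)\}$, the right-hand side is exactly the assertion $P=N=\emptyset$. I would prove the two implications separately, using throughout that each facet $F_i=e_i$ is an edge (a two-element set) and that, since $\Supp(\alpha)\cap\Supp(\beta)=\emptyset$ and $G$ is simple, any $i\in\Supp(\alpha)$ and $j\in\Supp(\beta)$ give distinct edges $e_i\neq e_j$, so two such edges meet in at most one vertex.

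For the easy direction ($\Leftarrow$), suppose $P=N=\emptyset$. For $i\in\Supp(\alpha)$ and $j\in\Supp(\beta)$ the edges $e_i$ and $e_j$ are distinct two-element sets, so $F_i\backslash F_j\neq\emptyset$ and $F_j\backslash F_i\neq\emptyset$; since a nonempty set cannot be contained in the empty set, both requirements of Definition~\ref{def:scew} hold, and $\CC$ is a simplicial even walk.

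For the main direction ($\Rightarrow$), suppose $\CC$ is a simplicial even walk. The first step is to record that every vertex lies on both an $\alpha$-edge and a $\beta$-edge: since $s\geq 2$ there is at least one $\beta$-edge, so $deg_{\beta}$ is not identically $0$, and Lemma~\ref{lem,mynewlemma} (whose connectivity hypothesis is exactly our assumption) then forces $deg_{\beta}(x)>0$ for all $x\in\VV(\CC)$; symmetrically $deg_{\alpha}(x)>0$ for all $x$. The heart of the proof is a local computation. Let $e_i=\{u,v\}$ be any $\alpha$-edge. Since $deg_{\beta}(v)>0$, pick a $\beta$-edge $e_j=\{v,y\}$; as $e_i\neq e_j$ they meet only in $v$, so $F_i\backslash F_j=\{u\}$, and the even-walk condition $F_i\backslash F_j\not\subset P$ gives $u\notin P$, i.e. $deg_{\alpha}(u)\leq deg_{\beta}(u)$; choosing instead a $\beta$-edge through $u$ gives $v\notin P$ in the same way. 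Thus both endpoints of every $\alpha$-edge lie outside $P$, and since every vertex lies on some $\alpha$-edge we conclude $P=\emptyset$. The mirror-image argument, applying the second condition $F_j\backslash F_i\not\subset N$ to a $\beta$-edge $e_j$ together with an $\alpha$-edge through each of its endpoints, gives $N=\emptyset$. Hence $P=N=\emptyset$, as required. (Combined with Lemma~\ref{lem:newlem}, this also identifies such simplicial even walks with the closed even walks of $G$.)

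The only real obstacle I anticipate is the upgrade from ``each vertex lies on some edge of $\CC$'' to ``each vertex lies on both an $\alpha$-edge and a $\beta$-edge'': the local computation collapses entirely at a vertex missing one of the two colors, and this is precisely where connectivity and Lemma~\ref{lem,mynewlemma} become indispensable. The remaining care is bookkeeping, namely checking that a chosen $\alpha$-edge and $\beta$-edge sharing a vertex are distinct (guaranteed by $\Supp(\alpha)\cap\Supp(\beta)=\emptyset$ together with simplicity) so that their set difference is a single vertex, which is what converts the non-containment conditions of Definition~\ref{def:scew} into the pointwise inequalities $deg_{\alpha}\leq deg_{\beta}$ and $deg_{\alpha}\geq deg_{\beta}$.
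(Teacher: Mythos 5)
Your proof is correct and takes essentially the same route as the paper's: both hinge on Lemma~\ref{lem,mynewlemma} to rule out vertices of $\beta$-degree (or $\alpha$-degree) zero, and on the observation that a distinct $\alpha$-edge and $\beta$-edge sharing a vertex have a singleton set difference, which converts the non-containment conditions of Definition~\ref{def:scew} into the pointwise inequalities. The only difference is organizational — the paper argues by contradiction at a hypothetical bad vertex, splitting on whether $deg_{\beta}(x)=0$, whereas you invoke the lemma up front to make all degrees positive and then conclude directly.
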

\begin{proof}
($\Longleftarrow$) is clear. To prove the converse we assume $\alpha=(i_1,i_3,\dots,i_{2s-1})$, $\beta=(i_2,i_4,\dots,i_{2s})$ \
and $\CC_{\alpha,\beta}$ is a simplicial even walk. We only need to show   
\begin{eqnarray*}
deg_{\alpha}(x)= deg_{\beta}(x)\hspace{.3 in}\mbox{for all $x\in \VV(\CC_{\alpha,\beta})$}.
\end{eqnarray*}       

Assume without loss of generality $deg_{\alpha}(x)>deg_{\beta}(x)\geq 0$, so there exists $i\in \Supp(\alpha)$ such that $x\in e_i$. We set 
$e_i=\{x,w_1\}$. 

Suppose $deg_{\beta}(x)\neq 0$. 
We can choose an edge  $e_{k}$ in $\CC_{\alpha,\beta}$ where $k\in \Supp(\beta)$
such that $x\in e_{i}\cap e_{k}$. We consider two cases. 
\renewcommand{\labelenumi}{(\arabic{enumi})}
\begin{enumerate}
\item If $deg_{\beta}(w_1)=0$, then since $deg_{\alpha}(w_1)\geq 1$ we have 
$$e_{i}\backslash e_{k}=\{w_1\}\subseteq \{z\in \VV(G):   deg_{\alpha}(z) > deg_{\beta}(z)\},$$ 
a contradiction. 
\item If $deg_{\beta}(w_1)\geq 1$, then there exists $h\in \Supp(\beta)$ with $w_1\in e_{h}$. So we have 
$$e_{i}\backslash e_{h}=\{x\}\subseteq \{z\in \VV(G):   deg_{\alpha}(z) > deg_{\beta}(z)\},$$
again a contradiction. 
\end{enumerate}
So we must have $deg_{\beta}(x)=0$. By Lemma~\ref{lem,mynewlemma} this implies that $deg_{\beta}(v)=0$ for every $v\in\VV(\CC_{\alpha,\beta})$, a contradiction, since 
$\Supp(\beta)\neq\emptyset$. 
\end{proof}

\begin{col}[\bf{$1$-dimensional simplicial even walks}]\label{them:graphwalk}
Let $G$ be a simple graph with edges $e_1,\dots,e_q$. Let 
$e_{i_1},\dots,e_{i_{2s}}$ be a sequence of edges of $G$ such that $\left\langle e_{i_1},\dots,e_{i_{2s}}\right\rangle$ is a 
connected subgraph of $G$ and $\{i_1,i_3,\dots,i_{2s-1}\}\cap\{i_2,i_4,\dots,i_{2s}\}=\emptyset$. Then  
$e_{i_1},\dots,e_{i_{2s}}$ is a simplicial even walk 
if and only if $e_{i_1},\dots,e_{i_{2s}}$ is a closed even walk in $G$.  
\end{col}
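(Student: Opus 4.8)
The plan is to prove Corollary~\ref{them:graphwalk} by reducing it to the two results immediately preceding it, namely Theorem~\ref{col:newcol} and Lemma~\ref{lem:newlem}. The corollary asserts an equivalence between two statements about the sequence $e_{i_1},\dots,e_{i_{2s}}$: being a $1$-dimensional simplicial even walk, and being a closed even walk in $G$. Both of these can be translated into the same degree condition, so the strategy is to show each side is equivalent to that common condition and then chain the equivalences together.

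First I would handle the simplicial-even-walk side. By Theorem~\ref{col:newcol}, with $\alpha=(i_1,i_3,\dots,i_{2s-1})$ and $\beta=(i_2,i_4,\dots,i_{2s})$, the sequence is a simplicial even walk if and only if
\begin{eqnarray*}
\{x\in \VV(\CC_{\alpha,\beta}):   deg_{\alpha}(x) > deg_{\beta}(x)\}=\{x\in \VV(\CC_{\alpha,\beta}):   deg_{\alpha}(x) < deg_{\beta}(x)\}=\emptyset.
\end{eqnarray*}
This emptiness condition says precisely that $deg_{\alpha}(x)=deg_{\beta}(x)$ for every vertex $x$, which in monomial terms is exactly the statement $f_{\alpha}=f_{\beta}$, since $f_{\alpha}=\prod_x x^{deg_{\alpha}(x)}$ and likewise for $\beta$. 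So the simplicial-even-walk condition is equivalent to $f_{\alpha}=f_{\beta}$.

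Next I would handle the closed-even-walk side, which is immediate from Lemma~\ref{lem:newlem}: with the same $\alpha$ and $\beta$, that lemma states that $\CC=e_{i_1},\dots,e_{i_{2s}}$ is a closed even walk if and only if $f_{\alpha}=f_{\beta}$. Combining the two equivalences, both ``simplicial even walk'' and ``closed even walk'' are equivalent to the single condition $f_{\alpha}=f_{\beta}$, so they are equivalent to each other, which is the claim. One bookkeeping point to verify is that the connectedness and disjoint-support hypotheses in the corollary match those needed to invoke both earlier results: Theorem~\ref{col:newcol} is stated under exactly the same connectedness and $\Supp(\alpha)\cap\Supp(\beta)=\emptyset$ hypotheses, and Lemma~\ref{lem:newlem} presupposes the alternating index structure that is built into the way $\alpha$ and $\beta$ are defined here.

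The main (and really the only) obstacle is essentially notational rather than conceptual: I must check that the $\alpha,\beta$ used to apply Lemma~\ref{lem:newlem} coincide with those used to apply Theorem~\ref{col:newcol}, and that the translation ``$deg_{\alpha}(x)=deg_{\beta}(x)$ for all $x$'' $\iff$ ``$f_{\alpha}=f_{\beta}$'' is legitimate for these squarefree-generated edge monomials. Since each $f_i=x_ix_{i+1}$ is a product of distinct variables and $f_{\alpha}$, $f_{\beta}$ are products of such monomials, comparing the two monomials variable-by-variable is exactly comparing $deg_{\alpha}$ and $deg_{\beta}$ at each vertex, so this step is routine. Given that both nontrivial directions have already been carried out in Theorem~\ref{col:newcol} and Lemma~\ref{lem:newlem}, the corollary follows with almost no additional work.
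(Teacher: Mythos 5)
Your proposal is correct and is essentially the paper's own argument: the paper likewise chains Theorem~\ref{col:newcol} (simplicial even walk $\iff$ both degree-difference sets empty) with Lemma~\ref{lem:newlem} (closed even walk $\iff$ $f_{\alpha}=f_{\beta}$), using the same translation between $deg_{\alpha}(x)=deg_{\beta}(x)$ for all $x$ and $f_{\alpha}=f_{\beta}$. The only difference is presentational — you organize it as a single chain of equivalences, while the paper writes out the two implications separately — so nothing of substance is added or missing.
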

\begin{proof}
 Let $I(G)=(f_1,\dots,f_q)$ be the edge ideal of $G$ and $\alpha=(i_1,i_3,\dots,i_{2s-1})$ and $\beta=(i_2,i_4,\dots,i_{2s})$ so that 
$\CC_{\alpha,\beta}=e_{i_1},\dots,e_{i_{2s}}$. Assume $\CC_{\alpha,\beta}$ is a closed even walk in $G$. 
Then we have 
\begin{align*}
f_{\alpha}=\prod_{x\in \VV(\CC_{\alpha,\beta})}x^{deg_{\alpha}(x)}=\prod_{x\in \VV(\CC_{\alpha,\beta})}x^{deg_{\beta}(x)}=f_{\beta}, 
\end{align*}  
where the second equality follows from Lemma~\ref{lem:newlem}.

So for every $x\in \VV(\CC_{\alpha,\beta})$ we have $deg_{\alpha}(x)=deg_{\beta}(x)$. In other words we have 
$$\{x\in \VV(\CC_{\alpha,\beta}):   deg_{\alpha}(x) > deg_{\beta}(x)\}=\{x\in \VV(\CC_{\alpha,\beta}):   deg_{\alpha}(x) < deg_{\beta}(x)\}=\emptyset,$$
and therefore we can say $\CC_{\alpha,\beta}$ is a simplicial even walk.
The converse follows directly from Theorem~\ref{col:newcol} and Lemma~\ref{lem:newlem}.
\end{proof}
We need the following proposition in the next sections. 
\begin{prop}\label{prop:new}
 Let $\CC_{\alpha,\beta}$ be a $1$-dimensional even walk, and $\langle \CC_{\alpha,\beta}\rangle=G$. Then every vertex of
$G$ has degree $>1$. In particular, $G$ is either an even cycle or contains 
at least two cycles.  \end{prop}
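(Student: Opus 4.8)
The plan is to reduce the statement to the single fact that $G$ has minimum degree at least $2$, and then read off the dichotomy from elementary graph theory, using the $\alpha/\beta$ labelling of the edges to pin down the parity in the borderline case. So first I would prove that every vertex of $G$ meets at least one edge indexed by $\Supp(\alpha)$ and at least one indexed by $\Supp(\beta)$.

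To establish this, fix $x\in\VV(G)$ and suppose toward a contradiction that $deg_{\beta}(x)=0$. Statement~\eqref{stat:new} from the proof of Lemma~\ref{lem,mynewlemma}, which holds for any $1$-dimensional even walk, lets me propagate $deg_{\beta}=0$ along edges; applying it repeatedly inside the connected component $C$ of $x$ shows $deg_{\beta}(v)=0$ for every $v\in C$. Consequently no edge of $C$ is a $\beta$-edge, so every edge of $C$ is indexed by $\Supp(\alpha)$. Since $\Supp(\beta)\neq\emptyset$, there is a $\beta$-edge $e_b$; its endpoints have positive $\beta$-degree, hence lie outside $C$, so $e_b$ is disjoint from any edge $e_a\subseteq C$ with $a\in\Supp(\alpha)$. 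For such a pair, $F_a\setminus F_b=e_a\subseteq\{x\in\VV(\CC_{\alpha,\beta}):deg_{\alpha}(x)>deg_{\beta}(x)\}$, contradicting the defining condition of an even walk. (When $\CC_{\alpha,\beta}$ is connected one may instead quote Corollary~\ref{them:graphwalk} and Lemma~\ref{lem:newlem} to get $f_\alpha=f_\beta$, i.e.\ $deg_{\alpha}=deg_{\beta}$ everywhere, and conclude directly.) Thus $deg_{\beta}(x)\geq1$, and symmetrically $deg_{\alpha}(x)\geq1$. Now $x$ lies on an edge $e_a$ with $a\in\Supp(\alpha)$ and on an edge $e_b$ with $b\in\Supp(\beta)$; disjointness of the supports forces $a\neq b$, hence $e_a\neq e_b$ are two distinct edges of $G$ through $x$, and $deg_G(x)\geq2$.

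With minimum degree at least $2$, I would finish by counting. If $V$, $E$, $c$ denote the numbers of vertices, edges, and components of $G$, then $2E=\sum_x deg_G(x)\geq2V$, so the cyclomatic number $E-V+c\geq c\geq1$ and $G$ contains a cycle. If $G$ is disconnected, or connected with a vertex of degree $\geq3$, then the same count gives cyclomatic number $\geq2$ (from $c\geq2$, or from $E\geq V+1$), which yields at least two distinct cycles. The remaining case is $G$ connected and $2$-regular, i.e.\ a single cycle. Here I would colour each edge by the unique part of $\Supp(\alpha)\sqcup\Supp(\beta)$ containing its index (well defined precisely because the supports are disjoint); by the previous paragraph each vertex meets exactly one edge of each colour, so the colours alternate around the cycle, which is possible only when its length is even. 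Hence $G$ is an even cycle.

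The step I expect to be most delicate is not the graph theory but the bookkeeping around degrees: $deg_{\alpha}(x)$ counts occurrences of edges through $x$ with the multiplicity they carry in $\alpha$, whereas $deg_G(x)$ counts distinct edges, so the passage from ``$deg_{\alpha}(x)\geq1$ and $deg_{\beta}(x)\geq1$'' to ``$deg_G(x)\geq2$'' must go through the existence of two genuinely distinct edges (guaranteed by the disjoint supports) rather than through any numerical identity. A secondary point to get right is the disconnected case, where ``even cycle'' can only describe a single connected cycle, so that branch must be routed into the ``at least two cycles'' alternative, exactly as the component count above arranges.
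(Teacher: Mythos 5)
Your proof is correct, but it takes a genuinely different route from the paper's in both halves. For the minimum-degree claim, the paper argues directly: if $v$ had degree $1$, say $v\in e_i$ with $i\in\Supp(\alpha)$, then Theorem~\ref{col:newcol} forces $deg_{\beta}(v)=deg_{\alpha}(v)\geq 1$, so $v$ lies on some $\beta$-edge $e_j$, and $deg(v)=1$ forces $i=j$, contradicting disjointness of the supports. You instead re-run the propagation statement (\ref{stat:new}) component by component and contradict Definition~\ref{def:scew} directly; this is longer but has a real payoff: Theorem~\ref{col:newcol} is stated only for connected subgraphs (and the proof of Lemma~\ref{lem,mynewlemma} also invokes connectedness), so your argument covers disconnected even walks, a case the paper's proof silently passes over even though the proposition does not assume connectedness. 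For the dichotomy, the paper gets one cycle from Corollary~\ref{col:mine}, then argues that if that cycle were unique, deleting its edges leaves a forest whose components must be singletons by the degree bound, so $G=C_n$, with evenness of $n$ deduced from Corollary~\ref{them:graphwalk}; you instead count with the cyclomatic number $E-V+c$ (getting two cycles when $G$ is disconnected or has a vertex of degree $\geq 3$) and settle the parity in the $2$-regular connected case by the $\alpha$/$\beta$ alternation of edge labels around the cycle. Both are sound; yours is more self-contained and elementary (the parity does not route through the closed-even-walk characterization, and the disconnected case is handled honestly), while the paper's is shorter because it leverages its earlier results. One tiny point worth stating explicitly in your write-up: the edge colouring is well defined not only because the supports are disjoint but also because distinct indices label distinct facets of the $1$-dimensional complex, so an edge of $G$ cannot carry both an $\alpha$-index and a $\beta$-index.
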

\begin{proof}
Suppose $G$ contains a vertex $v$ of degree $1$. Without 
loss of generality we can assume $v\in e_{i}$ where $i\in\Supp(\alpha)$. So $deg_{\alpha}(v)=1$ and from Theorem~\ref{col:newcol}
we have $deg_{\beta}(v)=1$.  Therefore, there is $j\in\Supp(\beta)$ such that $v\in e_j$. Since $deg(v)=1$ we must have $i=j$, a contradiction since 
$\Supp(\alpha)$ and $\Supp(\beta)$ are disjoint.    

Note that by Corollary~\ref{col:mine} $G$ contains a cycle. Now we show that $G$ contains at least two distinct cycles or it is an even cycle. 

Suppose $G$ contains
only one cycle $C_n$. Then removing the edges of $C_n$ 
leaves a forest of $n$ components. Since every vertex of $G$ has degree $>1$,  
each of the components must be singleton graphs (a null graph with only one vertex). So $G=C_n$. Therefore,  by  Corollary~\ref{them:graphwalk} and the fact 
that $\Supp(\alpha)$ and $\Supp(\beta)$ are disjoint, $n$ must be even.    
\end{proof}

\section{A necessary condition for a squarefree monomial ideal to be of linear type}

We are ready to state one of the main results of this paper which is a combinatorial method to detect irredundant Rees equations of squarefree monomial ideals.
We first show that these Rees equations come from even walks.
\begin{lem}\label{eqn:computationallemma}
Let $I=(f_1,\dots,f_q)$ be a squarefree monomial ideal in the polynomial ring $R$. Suppose  $s,t,h$ are integers with $s\geq 2$, $1\leq h\leq  q$ and $1\leq t \leq s$. 
Let $0\neq\gamma\in R$, $\alpha=(i_1,\dots,i_s),\beta=(j_1,\dots,j_s)\in \II_s$. Then 
\renewcommand{\labelenumi}{(\roman{enumi})}
\begin{enumerate}
 \item $\operatorname{lcm}(f_{\alpha},f_{\beta})=\gamma f_{h}\widehat{f}_{\alpha_{t}}\Longleftrightarrow  
T_{\alpha,\beta}=\lambda\widehat{T}_{\alpha_t} T_{(i_t),(h)}+\mu T_{\alpha_{t}(h),\beta}$ for some monomials $\lambda,\mu\in R$, $\lambda\neq 0$. 
\item $\operatorname{lcm}(f_{\alpha},f_{\beta})=\gamma f_{h}\widehat{f}_{\beta_{t}}\Longleftrightarrow 
T_{\alpha,\beta}=\lambda\widehat{T}_{\beta_t} T_{(h),(j_t)}+\mu T_{\alpha,\beta_t(h)}$ for some monomials $\lambda,\mu\in R$, $\lambda\neq 0$. 
\end{enumerate}
\end{lem}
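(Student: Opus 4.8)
The plan is to prove $(i)$ by a direct computation in the polynomial ring $R[T_1,\dots,T_q]$, comparing the coefficients of the $T$-monomials that occur; part $(ii)$ then follows verbatim after interchanging the roles of $\alpha$ and $\beta$. Write $L=\lcm(f_\alpha,f_\beta)$, and recall $\widehat{T}_{\alpha_t}=T_\alpha/T_{i_t}$, so that $\widehat{T}_{\alpha_t}T_h=T_{\alpha_t(h)}$ and $f_{\alpha_t(h)}=f_h\widehat{f}_{\alpha_t}=f_\alpha f_h/f_{i_t}$. Expanding
\[
\lambda\,\widehat{T}_{\alpha_t}T_{(i_t),(h)}+\mu\,T_{\alpha_t(h),\beta}
\]
according to the definition in (\ref{equation:new}) produces an $R$-linear combination of the three monomials $T_\alpha$, $T_{\alpha_t(h)}$ and $T_\beta$. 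I would record its three coefficients, the key observation being that the candidate values
\[
\lambda=\frac{L\,f_{i_t}}{f_\alpha\,\lcm(f_{i_t},f_h)},\qquad
\mu=\frac{L}{\lcm(f_{\alpha_t(h)},f_\beta)}
\]
make the coefficient of $T_\alpha$ equal to $L/f_\alpha$, the coefficient of $T_\beta$ equal to $-L/f_\beta$, and the middle coefficient of $T_{\alpha_t(h)}$ vanish. This last cancellation is, after substituting $f_{\alpha_t(h)}=f_\alpha f_h/f_{i_t}$, just the identity $f_\alpha f_h=f_{i_t}f_{\alpha_t(h)}$ and holds with no hypothesis; so whenever $\lambda,\mu\in R$ the expansion collapses to $T_{\alpha,\beta}=\tfrac{L}{f_\alpha}T_\alpha-\tfrac{L}{f_\beta}T_\beta$.

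For the direction $(\Rightarrow)$ I assume $L=\gamma f_h\widehat{f}_{\alpha_t}$, i.e.\ $f_{\alpha_t(h)}\mid L$, and check that the two candidate coefficients are genuine monomials of $R$, with $\lambda\neq 0$. Since $f_\beta\mid L$ always and $f_{\alpha_t(h)}\mid L$ by hypothesis, we get $\lcm(f_{\alpha_t(h)},f_\beta)\mid L$, so $\mu\in R$. For $\lambda$ I would argue one variable at a time: writing $\epsilon_x(g)$ for the exponent of the variable $x$ in a monomial $g$, squarefreeness of the $f_i$ gives $\epsilon_x(f_h)-\epsilon_x(f_{i_t})\in\{-1,0,1\}$, and exactly when this difference equals $1$ (that is, $x\in F_h\setminus F_{i_t}$) the hypothesis $f_{\alpha_t(h)}\mid L$ forces $\epsilon_x(L)-\epsilon_x(f_\alpha)\ge 1$ — which is precisely what is needed for $\epsilon_x(\lambda)\ge 0$. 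Thus $\lambda,\mu\in R$ with $\lambda\neq 0$, and the coefficient computation of the first paragraph yields the asserted identity.

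For $(\Leftarrow)$ I start from the identity and read a coefficient back off. Comparing coefficients of $T_\beta$ on the two sides forces $\mu\,\lcm(f_{\alpha_t(h)},f_\beta)=L$; since $\mu\in R$ this gives $\lcm(f_{\alpha_t(h)},f_\beta)\mid L$, hence $f_{\alpha_t(h)}\mid L$, i.e.\ $L=\gamma f_h\widehat{f}_{\alpha_t}$ as required. The only point requiring care — and what I expect to be a bookkeeping nuisance rather than a genuine obstacle — is that this comparison presumes $T_\alpha$, $T_{\alpha_t(h)}$, $T_\beta$ to be pairwise distinct. Here $\Supp(\alpha)\cap\Supp(\beta)=\emptyset$ separates $T_\alpha$ from $T_\beta$; the equality $T_{\alpha_t(h)}=T_\alpha$ occurs only when $h=i_t$, in which case $f_{\alpha_t(h)}=f_\alpha\mid L$ trivially; and $T_{\alpha_t(h)}=T_\beta$ forces $f_{\alpha_t(h)}=f_\beta\mid L$ trivially. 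So in every degenerate configuration the conclusion is immediate, and the clean coefficient comparison is needed only in the generic case.
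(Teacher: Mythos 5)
Your proof is correct and takes essentially the same route as the paper's: you exhibit exactly the monomials the paper constructs, namely $\lambda=\lcm(f_{\alpha},f_{\beta})f_{i_t}/\bigl(f_{\alpha}\lcm(f_{i_t},f_{h})\bigr)$ and $\mu=\lcm(f_{\alpha},f_{\beta})/\lcm(f_{\alpha_{t}(h)},f_{\beta})$, verify the decomposition by direct expansion, and prove the converse by comparing coefficients. The only cosmetic differences are that the paper compares coefficients of $T_{\alpha}$ rather than $T_{\beta}$ (which sidesteps your $T_{\alpha_t(h)}=T_{\beta}$ degeneracy, while the $h=i_t$ case is dispatched at the outset) and certifies $\lambda\in R$ by a divisibility argument instead of your variable-by-variable exponent count.
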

\begin{proof}
We only prove $(i)$; the proof of $(ii)$ is similar.

First note that if $h=i_t$ then $(i)$ becomes 
$$\operatorname{lcm}(f_{\alpha},f_{\beta})=\gamma f_{\alpha}\Longleftrightarrow  
T_{\alpha,\beta}=T_{\alpha,\beta}\hspace{.2 in}\text{(Setting $\mu=1$)}$$
and we have nothing to prove, so we assume that $h\neq i_t$. 

If we have $\operatorname{lcm}(f_{\alpha},f_{\beta})=\gamma f_{h}\widehat{f}_{\alpha_{t}}$, then the monomial $\gamma f_{h}$ is divisible by $f_{i_t}$, 
so there exists a nonzero exists a monomial 
$\lambda\in  R$ such that 
\begin{eqnarray}\label{eqn:equation4}
\lambda\operatorname{lcm}(f_{i_t},f_{h})=\gamma f_{h}.
\end{eqnarray}
It follows that 
\begin{eqnarray}\label{eqn:T}
\nonumber
\displaystyle T_{\alpha,\beta}&=&\displaystyle\left(\frac{\operatorname{lcm}(f_{\alpha},f_{\beta})}{f_{\alpha}}\right)T_{\alpha}-
\left(\frac{\operatorname{lcm}(f_{\alpha},f_{\beta})}{f_{\beta}}\right)T_{\beta}=
\displaystyle \left(\frac{\gamma f_h}{f_{i_t}}\right)T_{\alpha}-\left(\frac{\operatorname{lcm}(f_{\alpha},f_{\beta})}{f_{\beta}}\right)T_{\beta}\\
\nonumber
\\
T_{\alpha,\beta}&=&\displaystyle\lambda\widehat{T}_{\alpha_t}T_{(i_t),(h)}+\left(\frac{\lambda\operatorname{lcm}(f_{i_t},f_{h})}{f_{h}}\right)
{T}_{\alpha_t(h)}-\left(\frac{\operatorname{lcm}(f_{\alpha},f_{\beta})}{f_{\beta}}\right)T_{\beta}. 
\end{eqnarray}
On the other hand note that since we have 
\begin{eqnarray}\label{eqn:equation5}
\operatorname{lcm}(f_{\alpha},f_{\beta})=\gamma f_{h}\widehat{f}_{\alpha_{t}}=\gamma f_{\alpha_t(h)},
\end{eqnarray}
we see $\lcm(f_{\alpha_{t}(h)},f_{\beta})$ divides $\lcm(f_{\alpha},f_{\beta})$. Thus there exists a monomial $\mu\in R$ such that 
\begin{equation}\label{eqn:13}
\lcm(f_{\alpha},f_{\beta})=\mu\lcm(f_{\alpha_{t}(h)},f_{\beta}). 
\end{equation}
By (\ref{eqn:equation4}), (\ref{eqn:equation5}) and (\ref{eqn:13}) we have 
\begin{eqnarray}
\frac{\lambda\operatorname{lcm}(f_{i_t},f_{h})}{f_{h}}=\frac{\lambda\operatorname{lcm}(f_{i_t},f_{h})\widehat{f}_{\alpha_t}}{f_{\alpha_{t}(h)}}=
\frac{\gamma f_h\widehat{f}_{\alpha_t}}{f_{\alpha_{t}(h)}}=
\frac{\lcm(f_{\alpha},f_{\beta})}{f_{\alpha_{t}(h)}}=
\frac{\mu\lcm(f_{\alpha_{t}(h)},f_{\beta})}{f_{\alpha_{t}(h)}}.\label{eqn:newsara}
\end{eqnarray}
Substituting  (\ref{eqn:13}) and (\ref{eqn:newsara}) in (\ref{eqn:T}) we get
$$T_{\alpha,\beta}=\lambda\widehat{T}_{\alpha_t}T_{(i_t),(h)}+\mu T_{\alpha_{t}(h),\beta}.$$
For the converse since $h\neq i_t$, by comparing coefficients we have  
\begin{eqnarray*}
\frac{\operatorname{lcm}{(f_{\alpha},f_{\beta})}}{f_{\alpha}}=\lambda\left(\frac{\lcm(f_{i_t},f_h)}{f_{i_t}} \right)=\lambda \prod_{x\in F_{h}\backslash F_{i_t}}x&\Longrightarrow&
\operatorname{lcm}{(f_{\alpha},f_{\beta})}=\lambda\left(\prod_{x\in F_{h}\backslash F_{i_t}}x\right)f_{\alpha}\\
&\Longrightarrow& \operatorname{lcm}{(f_{\alpha},f_{\beta})}=\lambda_0 f_{h}\hat{f}_{\alpha_t}
\end{eqnarray*}
where $0\neq \lambda_0\in R$. This concludes our proof. 
\end{proof}
Now we show that there is a direct connection between  redundant Rees equations and the above lemma. 
\begin{theorem}\label{eqn:maintheorem}
Let $\Delta=\left\langle  F_1,\dots,F_q\right\rangle $ be a simplicial complex
, $\alpha,\beta\in \II_{s}$ and $s\geq 2$ an integer.
If $\CC_{\alpha,\beta}$ 
is not an even walk then 
$$T_{\alpha,\beta}\in J_1S+J_{s-1}S.$$ 
\end{theorem}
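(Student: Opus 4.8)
The plan is to reduce everything to the computational Lemma~\ref{eqn:computationallemma}, which already converts an $\lcm$-factorization of the form $\lcm(f_{\alpha},f_{\beta})=\gamma f_{h}\widehat{f}_{\alpha_{t}}$ into a decomposition of $T_{\alpha,\beta}$ whose pieces live in lower-degree modules. By the standing assumption we may take $\Supp(\alpha)\cap\Supp(\beta)=\emptyset$. Since $\CC_{\alpha,\beta}$ is not an even walk, negating Definition~\ref{def:scew} produces indices $i\in\Supp(\alpha)$ and $j\in\Supp(\beta)$ for which at least one of
\[
F_{i}\backslash F_{j}\subseteq\{x: deg_{\alpha}(x)>deg_{\beta}(x)\}\quad\text{or}\quad F_{j}\backslash F_{i}\subseteq\{x: deg_{\alpha}(x)<deg_{\beta}(x)\}
\]
holds. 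These two cases are interchanged by swapping $\alpha\leftrightarrow\beta$ (which also interchanges parts $(i)$ and $(ii)$ of Lemma~\ref{eqn:computationallemma}), so I would treat only the second containment and let $t$ be a position with $i_{t}=i$.

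The heart of the argument is translating this set-containment into the $\lcm$ hypothesis of the lemma. Writing $\lcm(f_{\alpha},f_{\beta})/f_{\alpha}=\prod_{deg_{\alpha}(x)<deg_{\beta}(x)}x^{deg_{\beta}(x)-deg_{\alpha}(x)}$ from (\ref{eqn:lcmequation}), the containment $F_{j}\backslash F_{i}\subseteq\{x: deg_{\alpha}(x)<deg_{\beta}(x)\}$ says exactly that every vertex of $F_{j}\backslash F_{i}$ divides $\lcm(f_{\alpha},f_{\beta})/f_{\alpha}$; since $f_{i},f_{j}$ are squarefree this is precisely the divisibility $f_{j}\widehat{f}_{\alpha_{t}}\mid\lcm(f_{\alpha},f_{\beta})$, i.e. $\lcm(f_{\alpha},f_{\beta})=\gamma f_{j}\widehat{f}_{\alpha_{t}}$ for a monomial $\gamma$. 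Applying Lemma~\ref{eqn:computationallemma}$(i)$ with $h=j$ then gives
\[
T_{\alpha,\beta}=\lambda\,\widehat{T}_{\alpha_{t}}\,T_{(i),(j)}+\mu\,T_{\alpha_{t}(j),\beta}.
\]
In the first summand $T_{(i),(j)}$ is a Rees equation of degree $1$ with $i\neq j$, so it lies in $J_{1}$ and the whole term lies in $J_{1}S$. In the second summand I would use that $j\in\Supp(\beta)$: the tuple $\alpha_{t}(j)$ now shares the index $j$ with $\beta$, so by the degree-$1$ reduction $T_{\alpha_{t}(j),\beta}=T_{j}\,T_{\widehat{\alpha}_{t},\beta'}$, where $\widehat{\alpha}_{t},\beta'\in\II_{s-1}$ are obtained by deleting the common $j$; since $\Supp(\widehat{\alpha}_{t})\subseteq\Supp(\alpha)$ and $\Supp(\beta')\subseteq\Supp(\beta)$ remain disjoint, $T_{\widehat{\alpha}_{t},\beta'}$ is a genuine degree-$(s-1)$ generator of $J_{s-1}$, so this term lies in $J_{s-1}S$. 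Hence $T_{\alpha,\beta}\in J_{1}S+J_{s-1}S$.

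The step I expect to be the main obstacle is the translation in the second paragraph: one must check that the purely combinatorial failure of the even-walk condition is \emph{exactly} the $\lcm$-divisibility hypothesis of Lemma~\ref{eqn:computationallemma}, using squarefreeness to control the exponents vertex by vertex, and one must verify that the residual generator really has degree $s-1$ rather than dropping lower. The latter is what forces the choice $h=j\in\Supp(\beta)$, so that exactly one new coincidence of indices is created while the remaining supports stay disjoint; choosing $h$ in $\Supp(\alpha)$ instead would leave $T_{\alpha_{t}(h),\beta}$ in $J_{s}$ and fail to reduce the degree. Everything else is bookkeeping already packaged inside Lemma~\ref{eqn:computationallemma} and the reduction of overlapping supports.
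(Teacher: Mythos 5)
Your proof is correct and follows essentially the same route as the paper's own argument: negate Definition~\ref{def:scew}, translate the resulting containment (via squarefreeness and equation~(\ref{eqn:lcmequation})) into the factorization $\lcm(f_{\alpha},f_{\beta})=\gamma f_{j}\widehat{f}_{\alpha_{t}}$, invoke Lemma~\ref{eqn:computationallemma} with $h=j\in\Supp(\beta)$, and split the two summands into $J_{1}S$ and $J_{s-1}S$ using $T_{\alpha_{t}(j),\beta}=T_{j}T_{\widehat{\alpha}_{t},\widehat{\beta}_{l}}$. Your symmetry reduction to a single case (swapping $\alpha\leftrightarrow\beta$) is a harmless streamlining of the paper's ``similar argument'' for the second case.
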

\begin{proof}
Let $I=(f_1,\dots,f_q)$  be the facet ideal of $\Delta$ and let $\alpha=(i_1,\dots,i_s),\beta=(j_1,\dots,j_s)\in \II_{s}$. 
 If $C_{\alpha,\beta}$ is not an even walk, then by 
Definition~\ref{def:scew} there exist 
$i_t\in \Supp(\alpha)$ and $j_l\in \Supp(\beta)$ such that one of the following is true
\renewcommand{\labelenumi}{(\arabic{enumi})}
\begin{enumerate}
 \item $F_{j_l}\backslash F_{i_t}\subseteq \{x\in \VV(\Delta):   deg_{\alpha}(x) < deg_{\beta}(x)\}$;
\item $F_{i_t}\backslash F_{j_l}\subseteq \{x\in \VV(\Delta):   deg_{\alpha}(x) > deg_{\beta}(x)\}$.
\end{enumerate}
Suppose $(1)$ is true. Then there exists a monomial $m\in R$ such that 
\begin{eqnarray}\label{eqn:useful}
\frac{\operatorname{lcm}{(f_{\alpha},f_{\beta})}}{f_{\alpha}}=\prod_{deg_{\beta}(x) > deg_{\alpha}(x)} x^{deg_{\beta}(x)-deg_{\alpha}(x)}=m \prod_{x\in F_{j_l}\backslash F_{i_t}}x.
\end{eqnarray}
So we have 
$$\operatorname{lcm}{(f_{\alpha},f_{\beta})}= mf_{\alpha} \prod_{x\in F_{j_l}\backslash F_{i_t}}x =m_0f_{j_l}\widehat{f}_{\alpha_{t}}$$
where $m_0\in R$. On the other hand by Lemma~\ref{eqn:computationallemma} there exist monomials $0\neq\lambda,\mu\in R$ such that 
\begin{eqnarray*}
 T_{\alpha,\beta}&=&\lambda\widehat{T}_{\alpha_t}T_{(i_t),(j_l)}+\mu T_{\alpha_t(j_l),\beta}\\
&=&\lambda\widehat{T}_{\alpha_t}T_{(i_t),(j_l)}+\mu T_{j_l} T_{\widehat{\alpha}_{t},\widehat{\beta}_{l}}\in J_1S+J_{s-1}S\hspace{.2 in}(\mbox{since $j_l\in\Supp(\beta)$}).
\end{eqnarray*}
If case $(2)$ holds, a similar argument settles our claim.

\end{proof}
\begin{col}\label{eqn:maintheorem2}
Let $\Delta=\left\langle  F_1,\dots,F_q\right\rangle $ be a simplicial complex
and $s\geq 2$ be an integer. Then we have 
$$J=J_1S+\left(\bigcup_{i=2}^{\infty}P_{i}\right)S$$ 
where $P_{i}=\{T_{\alpha,\beta}:\alpha,\beta\in\II_{i}\hspace{.05 in}\mbox{ and $\CC_{\alpha,\beta}$ is an even walk}\}$.
\end{col}
\begin{theorem}[{\bf Main Theorem}]\label{col:linear}
 Let $I$ be a squarefree monomial ideal in $R$ and suppose the facet complex $\FFF(I)$ has no even walk. Then $I$ is of linear type.  
\end{theorem}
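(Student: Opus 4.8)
The plan is to reduce the statement to the single equality $J=J_1S$. By the definition of linear type recalled above, $I$ is of linear type precisely when $J$ is generated by its degree-one part, i.e. when $J=(J'_1)=J_1S$. Since we always have $J=J_1S+J_2S+\cdots$, the entire task is to show that the hypothesis ``no even walk'' forces every higher-degree piece $J_sS$ ($s\geq 2$) to collapse into $J_1S$.

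The quickest route is to invoke Corollary~\ref{eqn:maintheorem2}, which already records the decomposition
\[
J=J_1S+\left(\bigcup_{i=2}^{\infty}P_{i}\right)S,
\]
where $P_{i}$ consists of exactly those $T_{\alpha,\beta}$ with $\alpha,\beta\in\II_{i}$ for which $\CC_{\alpha,\beta}$ is an even walk. Under the hypothesis that $\FFF(I)$ contains no even walk, no choice of indices $\alpha,\beta\in\II_{i}$ (for any $i\geq 2$) can produce an even walk $\CC_{\alpha,\beta}$, so each $P_{i}$ is empty. Hence $\bigcup_{i=2}^{\infty}P_{i}=\emptyset$ and the decomposition collapses to $J=J_1S$, which is the claim. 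If one prefers a self-contained argument not citing the corollary, I would instead induct on the degree $s$: because there are no even walks, every $\CC_{\alpha,\beta}$ with $\alpha,\beta\in\II_{s}$ fails to be an even walk, so Theorem~\ref{eqn:maintheorem} places each generator $T_{\alpha,\beta}$ of $J_s$ inside $J_1S+J_{s-1}S$; iterating this downward collapses all higher pieces into $J_1S$.

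The genuinely hard work has in fact already been carried out, so I do not expect a real obstacle in this last step. The decisive input is the degree analysis behind Theorem~\ref{eqn:maintheorem}, which in turn rests on the computational Lemma~\ref{eqn:computationallemma}; once that machinery is in place, the Main Theorem is a clean specialization to the even-walk-free case. The only point requiring care is purely bookkeeping: I should confirm that the reduction supplied by Theorem~\ref{eqn:maintheorem} always pushes a degree-$s$ generator strictly down to degree $s-1$, never sideways into some other degree-$s$ generator, so that the induction terminates cleanly at degree $1$. This is automatic here, since the absence of even walks is a property of the whole complex $\FFF(I)$ and therefore holds simultaneously in every degree, leaving nothing to survive above degree one.
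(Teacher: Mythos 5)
Your proposal is correct and follows essentially the same route as the paper: the Main Theorem there is stated as an immediate consequence of Corollary~\ref{eqn:maintheorem2} (itself obtained by iterating Theorem~\ref{eqn:maintheorem} down the degrees), which is exactly what you invoke, and your fallback induction on $s$ is precisely the argument hiding behind that corollary. The bookkeeping point you flag is indeed harmless, since Theorem~\ref{eqn:maintheorem} places each degree-$s$ generator in $J_1S+J_{s-1}S$, so the descent strictly lowers degree and terminates at $J_1S$.
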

The following theorem, can also be deduced from combining Theorem 1.14 in \cite{jahan2012} and Theorem 2.4 in \cite{conca1999}.
In our case, it follows directly from Theorem~\ref{col:linear} and   Theorem~\ref{theorem:goodleaf}.
\begin{col}
 The facet ideal of a simplicial forest is of linear type. 
\end{col}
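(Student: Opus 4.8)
The plan is to derive this corollary as an immediate consequence of the Main Theorem (Theorem~\ref{col:linear}) together with Theorem~\ref{theorem:goodleaf}, the bridge between them being the observation that the facet complex of a facet ideal recovers the original complex. First I would let $\Delta$ be a simplicial forest and set $I=\FFF(\Delta)$, its facet ideal. Since $I$ is generated by the squarefree monomials $\prod_{x\in F}x$ as $F$ ranges over the facets of $\Delta$, the facet complex $\FFF(I)$ associated to $I$ is exactly $\Delta$ again: a vertex $x_j$ divides the generator coming from $F$ precisely when $x_j\in F$. This identification is the only bookkeeping point, and it is immediate from the definitions of facet ideal and facet complex given in the excerpt.

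Next I would invoke Theorem~\ref{theorem:goodleaf}, which asserts that a simplicial forest contains no simplicial even walk. Applying it to $\Delta=\FFF(I)$ shows that the facet complex of $I$ has no even walk, so the hypothesis of the Main Theorem is met verbatim. Theorem~\ref{col:linear} then applies and yields that $I$ is of linear type, which is the claim. (Internally, Theorem~\ref{theorem:goodleaf} itself rests on the existence of a good leaf in every simplicial forest, Theorem~\ref{theorem:herzog}, so the whole chain is self-contained within the paper.)

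I do not expect a genuine obstacle here, as the statement is a formal corollary rather than an independent result; the proof is essentially a two-line composition. The only thing requiring a moment's care is the compatibility with the paper's standing normalizations, namely the reduction to $\gcd(f_1,\dots,f_q)=1$ and the passage from $J$ to the submodules $J_s$. One should note that these do not interfere with the argument: the identity $\FFF(I)=\Delta$ holds regardless of a common vertex, and dividing out a common factor only replaces $I$ by $I'$ with $R[It]=R[I't]$, so the property of being of linear type is unaffected. Finally, I would record, as the surrounding text already does, that the same conclusion follows alternatively by combining Theorem~1.14 of~\cite{jahan2012} with Theorem~2.4 of~\cite{conca1999}; but the route through Theorem~\ref{col:linear} and Theorem~\ref{theorem:goodleaf} is the most direct and keeps the deduction entirely inside the present framework.
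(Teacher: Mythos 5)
Your proposal is correct and is exactly the paper's own argument: the authors deduce this corollary directly from Theorem~\ref{col:linear} and Theorem~\ref{theorem:goodleaf}, just as you do (and they likewise mention the alternative route via \cite{jahan2012} and \cite{conca1999}). Your extra bookkeeping remarks about $\FFF(\FFF(\Delta))=\Delta$ and the $\gcd$ normalization are fine but not needed beyond what the paper states.
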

The converse of Theorem~\ref{eqn:maintheorem} is not in general true. For example see the following.
\begin{example}\label{example:goodexample} Let $\alpha=(1,3),\beta=(2,4)$. In Figure [\ref{figure1}] we see $\CC_{\alpha,\beta}=F_1,F_2,F_3,F_4$ is an even walk 
but  
we have 
$$T_{\alpha,\beta}=x_4x_8 T_1T_3-x_1x_6 T_2T_4=x_8T_3(x_4T_1-x_2T_5)+T_5(x_2x_8T_3-x_5x_6T_4)+x_6T_4(x_5T_5-x_1T_2)\in J_1S .$$ 
\begin{figure}[H]
  \centering
\includegraphics[width=1.5 in]{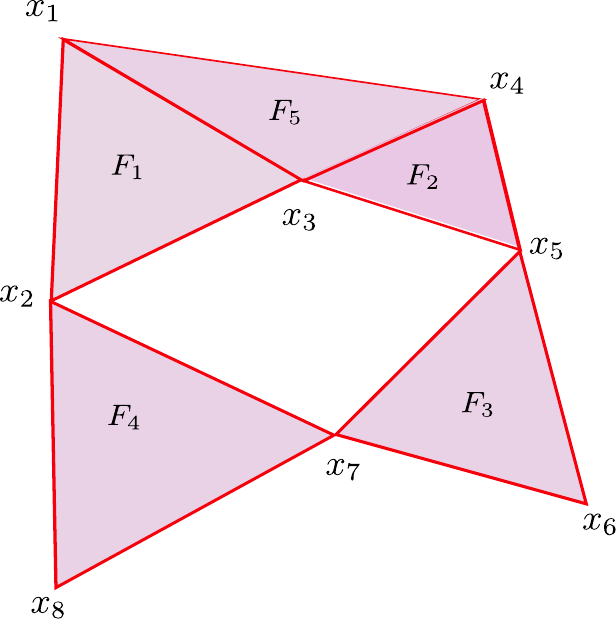}
  \caption{}\label{figure1}
\end{figure}
\end{example}
By Theorem~\ref{eqn:maintheorem}, all irredundant generators of $J$ of deg $>1$ correspond to even walks. However  
irredundant generators of $J$ do not correspond to minimal even walks in $\Delta$ (even walks that do not properly contain other even walks). For instance $\CC_{(1,3,5),(2,4,6)}$ 
as displayed in Figure~[\ref{figure2}] is an even walk which is not minimal (since  $\CC_{(3,5),(2,4)}$ and $\CC_{(1,5),(2,6)}$ are even walks which contain properly in  
$\CC_{(1,3,5),(2,4,6)}$). But $T_{(1,3,5),(2,4,6)}\in J$ is an irredundant generator of $J$.

We can now state a simple necessary condition for a simplicial complex to be of linear type in terms of its line graph.
\begin{defn}
Let $\Delta=\left\langle F_1,\dots, F_n\right\rangle $ be a simplicial complex. The {\bf line graph} $L(\Delta)$ of  $\Delta$ 
is a graph whose vertices are labeled with the facets of $\Delta$, and two vertices labeled $F_i$ and $F_j$ are adjacent if and only if $F_{i}\cap F_j\neq \emptyset$. 
\end{defn}
\begin{theorem}[\bf{A simple test for linear type}]\label{eqn:mycol}
Let $\Delta$ be a simplicial complex and suppose $L(\Delta)$ contains no even cycle. Then $\FFF(\Delta)$ is of linear type.
 \end{theorem}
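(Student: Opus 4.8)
The plan is to reduce this statement to the Main Theorem (Theorem~\ref{col:linear}) by contraposition: since Theorem~\ref{col:linear} guarantees linear type whenever $\FFF(\Delta)$ has no even walk, it suffices to show that if $L(\Delta)$ has no even cycle, then $\Delta$ has no simplicial even walk. Equivalently, I would prove that the existence of an even walk $\CC_{\alpha,\beta}$ in $\Delta$ forces an even cycle in $L(\Delta)$. The bridge between the two notions is simply the definition of the line graph: two facets are adjacent in $L(\Delta)$ exactly when they intersect, so any sequence of facets with consecutive nonempty intersections traces out a closed walk in $L(\Delta)$.

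First I would invoke the structure theorem for even walks, Proposition~\ref{eqn:luisatheorem}(ii), which says that $\langle\CC_{\alpha,\beta}\rangle$ contains an extended trail of even length $F_{v_1},F_{v_2},\dots,F_{v_{2l}}$ with $v_1,v_3,\dots,v_{2l-1}\in\Supp(\alpha)$ and $v_2,v_4,\dots,v_{2l}\in\Supp(\beta)$. The two points I would then pin down are the following. First, these $2l$ facets are \emph{distinct}: the odd-indexed ones lie in $\Supp(\alpha)$ and the even-indexed ones in $\Supp(\beta)$, and by the remark following Definition~\ref{def:scew} we have $\Supp(\alpha)\cap\Supp(\beta)=\emptyset$, while an extended trail is a subcollection of $\Delta$ and hence consists of distinct facets. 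Second, by the definition of extended trail, $F_{v_i}\cap F_{v_{i+1}}\neq\emptyset$ for every $i=1,\dots,2l$ taken mod $2l$. Reading this in $L(\Delta)$, the distinct vertices $F_{v_1},\dots,F_{v_{2l}}$ are consecutively adjacent and the edge $\{F_{v_{2l}},F_{v_1}\}$ closes them up, so they form a cycle of length $2l$ in $L(\Delta)$. Since this length is even and (an extended trail having at least three facets, forced to be even length) satisfies $2l\geq 4$, I obtain a genuine even cycle, contradicting the hypothesis. Therefore $\Delta$ has no even walk, and Theorem~\ref{col:linear} gives that $\FFF(\Delta)$ is of linear type.

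The main obstacle I anticipate is not the construction of the closed walk, which is immediate from Proposition~\ref{eqn:luisatheorem}(ii), but rather verifying that it is a bona fide cycle rather than a degenerate closed walk in the simple graph $L(\Delta)$. This is where distinctness of the facets $F_{v_1},\dots,F_{v_{2l}}$ and the length bound $2l\geq 4$ do the real work: an even walk can have repeated facets in general (as the bicycle example in Figure~\ref{fig:bicycle} shows), so I must be sure to extract the cycle from the extended trail supplied by the structure theorem rather than from the raw walk $\CC_{\alpha,\beta}$. I would also note that passing from $\langle\CC_{\alpha,\beta}\rangle$ to the full complex $\Delta$ causes no trouble, since adding the remaining facets of $\Delta$ only enlarges $L(\Delta)$ and thus preserves the even cycle we have found.
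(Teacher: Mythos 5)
Your proof is correct and takes essentially the same route as the paper: assume an even walk exists, extract an even extended trail via Proposition~\ref{eqn:luisatheorem}(ii), observe that its facets trace an even cycle in $L(\Delta)$ for a contradiction, and then conclude with Theorem~\ref{col:linear}. Your extra care in verifying that the trail's facets are distinct (so the closed walk is a genuine cycle) and that its length is at least $4$ merely makes explicit what the paper's one-line argument leaves implicit.
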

\begin{proof}
We show that $\Delta$ contains no even walk $\CC_{\alpha,\beta}$. Otherwise by Proposition~\ref{eqn:luisatheorem} 
$\CC_{\alpha,\beta}$ contains an even extended trail $B$, and $L(B)$ is then an even cycle contained in $L(\Delta)$ which is a 
contradiction. Theorem~\ref{col:linear} settles our claim.   
\end{proof}
Theorem~\ref{eqn:mycol} generalizes results of Lin and Fouli~\cite{fouli2013}, where they showed if $L(\Delta)$ is a tree or is an odd cycle then $I$ is of linear type. 

It must be noted, however, that the converse of Theorem~\ref{eqn:mycol} is not true. Below is an example of an ideal of linear type whose line graph contains an even cycle. 
\begin{example}
In the following simplicial complex $\Delta$,  $L(\Delta)$ contains an even cycle but its facet ideal $\FFF(\Delta)$ is of 
linear type.  
\begin{figure}[H]
 \centering
\includegraphics[width=2.5 in]{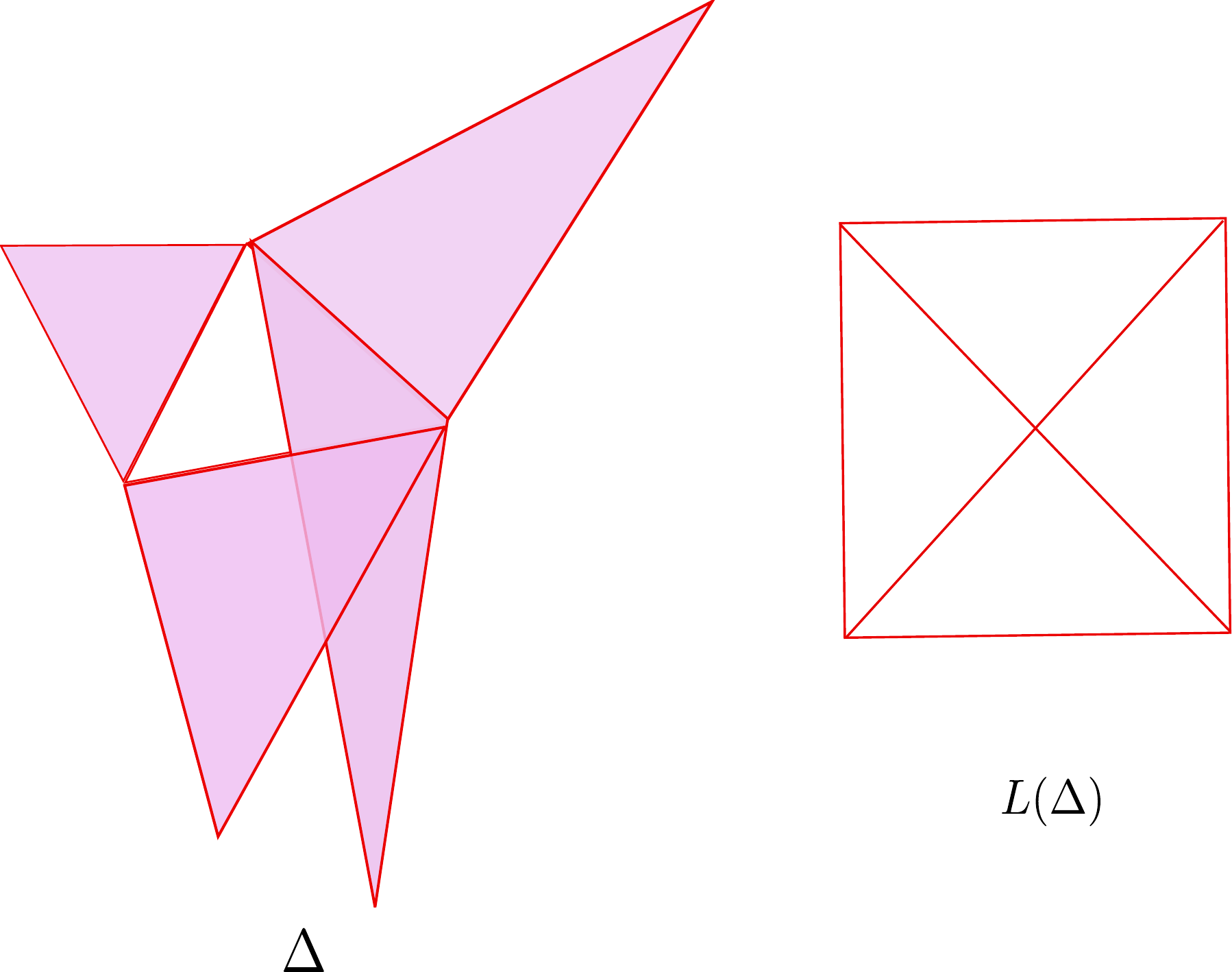}
\end{figure}
\end{example}
By applying Theorem~\ref{col:linear} and Proposition~\ref{prop:new} we conclude the following statement, which was originally proved by Villarreal in \cite{Villarreal1995}.
\begin{col}\label{col:vill}
 Let $G$ be a graph which is either tree or contains a unique cycle and that cycle is odd. Then the edge ideal $\FFF(G)$ is of linear type.
\end{col}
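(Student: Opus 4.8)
The plan is to invoke the Main Theorem (Theorem~\ref{col:linear}): it suffices to show that the facet complex $\FFF(G)$ contains no simplicial even walk, since this immediately yields that $\FFF(G)$ is of linear type. Because $G$ is a graph, $\FFF(G)$ is a $1$-dimensional simplicial complex, so I can apply the structural results specialized to dimension $1$, namely Proposition~\ref{prop:new}, which pins down the possible shapes of a $1$-dimensional even walk.

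First I would dispose of the tree case. A tree is a connected simplicial forest, so by Theorem~\ref{theorem:goodleaf} it contains no simplicial even walk, and Theorem~\ref{col:linear} then settles this case. (This is also exactly the content of the earlier corollary that the facet ideal of a simplicial forest is of linear type.)

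For the remaining case, suppose $G$ contains a unique cycle and that cycle is odd, and suppose toward a contradiction that $G$ contains an even walk $\CC_{\alpha,\beta}$. Writing $H=\langle \CC_{\alpha,\beta}\rangle$, I observe that $H$ is a subgraph of $G$. By Proposition~\ref{prop:new}, $H$ is either an even cycle or contains at least two distinct cycles. In the first case $H$ is a cycle sitting inside $G$, hence it must coincide with the unique cycle of $G$; but that cycle is odd, contradicting that $H$ is an even cycle. In the second case $H$, and therefore $G$, would contain two distinct cycles, contradicting the uniqueness hypothesis. Either alternative produces a contradiction, so $G$ admits no even walk, and Theorem~\ref{col:linear} gives that $\FFF(G)$ is of linear type.

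There is essentially no hard step in this argument: it is a direct specialization of Proposition~\ref{prop:new} together with the Main Theorem. The only point deserving a moment of care is the observation that any cycle occurring as a subgraph of $H$ is already a cycle of $G$, which is what lets me rule out \emph{both} the ``even cycle'' and the ``two distinct cycles'' options of Proposition~\ref{prop:new} purely from the hypothesis that the only cycle of $G$ is a single odd one.
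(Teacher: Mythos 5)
Your proposal is correct and is exactly the paper's intended argument: the paper proves this corollary by combining Theorem~\ref{col:linear} with Proposition~\ref{prop:new}, which is precisely the case analysis (even cycle versus two distinct cycles, plus the forest case for trees) that you carried out in detail.
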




\section*{Acknowledgments}


This paper was prepared while the authors were visiting MSRI. We are grateful to Louiza Fouli, Elizabeth Gross and Elham Roshanbin for further discussions, 
Jonathan Montano and Jack Jeffries for feedback and 
to MSRI for their hospitality. We also would like to thank the anonymous referee for his or her helpful comments. The results of this paper were obtained 
with aid of the the computer algebra software Macualay 2~\cite{Macaulay2}.

\end{document}